\newtheorem{lemma}{\sc Lemma}
\newtheorem{theorem}[lemma]{\sc Theorem}
\newtheorem{corollary}[lemma]{\sc Corollary}
\newtheorem{remark}{\sc Remark}
\newtheorem{example}{\sc Example}
\newtheorem{assumption}{\sc Assumption}
\newtheorem{definition}{\sc Definition}
\renewcommand{\matrix}[2]{\left[\begin{array}{#1} #2 \end{array}\right] }
\newcommand{\T}{\top}
\DeclareMathOperator*{\arginf}{arg\,inf}
\newcommand{\vect}{\mathrm{vec}}
\DeclareMathOperator*{\diag}{diag}
\newcommand{\newqed}{$\rhd$}
\newcommand*{\LONGVERSION}{}
\newenvironment{proof}{{\noindent \bf Proof:\ }}{ \hfill $\square$}
\date{}
\title{Optimal Control Design under Limited Model \\ Information for Discrete-Time Linear Systems with \\ Stochastically-Varying Parameters\thanks{An early version of this paper was presented at the 51st IEEE Conference on Decision and Control, 2012~\cite{FJCDC2012}. The work was supported by the Swedish Research Council and the Knut and Alice Wallenberg Foundation.}}
\author{Farhad~Farokhi$^\dag$~and~Karl~H.~Johansson\thanks{F.~Farokhi and K.~H.~Johansson are with ACCESS Linnaeus Center, School of Electrical Engineering, KTH Royal Institute of Technology, SE-100 44 Stockholm, Sweden. E-mails:\{farokhi,kallej\}@ee.kth.se }}
\begin{document}

\maketitle

\begin{abstract} The value of plant model information available in the control design process is discussed. We design optimal state-feedback controllers for interconnected discrete-time linear systems with stochas-tically-varying parameters. The parameters are assumed to be independently and identically distributed random variables in time. The design of each controller relies only on (\emph{i})~exact local plant model information and (\emph{ii})~statistical beliefs about the model of the rest of the system. We consider both finite-horizon and infinite-horizon quadratic cost functions. The optimal state-feedback controller is derived in both cases. The optimal controller is shown to be linear in the state and to depend on the model parameters and their  statistics in a particular way. Furthermore, we study the value of model information in optimal control design using the performance degradation ratio which is defined as the supremum (over all possible initial conditions) of the ratio of the cost of the optimal controller with limited model information scaled by the cost of the optimal controller with full model information. An upper bound for the performance degradation ratio is presented for the case of fully-actuated subsystems. Comparisons are made between designs based on limited, statistical, and full model information. Throughout the paper, we use a power network example to illustrate concepts and results.
\end{abstract}

\section{Introduction}
\subsection{Motivation}
Large-scale systems such as automated highways~\cite{swaroop:462,Horowitz871301}, aircraft and satellite formations~\cite{kapila2000spacecraft,Giulietti887447}, supply chains~\cite{Braun2003229,Dunbar2007}, power grids and other shared infrastructures~\cite{Massoud2005,Negenborn2010} are typically composed of several locally controlled subsystems that are connected to each other either through the physical dynamics, the communication infrastructure, or the closed-loop performance criterion. The problem of designing these local controllers, widely known as distributed or decentralized control design, is an old and well-studied problem in the literature~\cite{witsenhausen1968counterexample,levine1971optimal,wang1973stabilization,blondel1997np,rotkowitz2006characterization,voulgaris2003optimal,Siljak1991decentralized}.
Although the controller itself is highly structured for these large-scale systems, it is commonly assumed that the complete model of the system is available and the design is done in a centralized fashion using the global plant model information. However, this assumption is usually not easily satisfied in practice. For instance, this might be because the design of each local controller is done by a separate designer with no access to the global plant model because the full plant model information is not available at the time of design or it might change later. Recently, this concern has become more important as engineers implement large-scale systems using off-the-shelf components which are designed in advance with limited prior knowledge of their future operating condition. Another reason to consider control design based on only local information is to simplify the tuning and the maintenance of the system. For instance, dependencies between cyber components in a large system can cause complex interactions influencing the physical plant, not present without the controller. Privacy concerns could also be a motivation for designing control actions using only local information. For further motivations behind optimal control design using local model information, see~\cite{Farokhi-thesis2012}.

As an illustrative physical example, let us consider a power network control problem with power being generated in generators and distributed throughout the network via transmission lines (e.g.,~\cite{kundur1994power,anderson2003power}). It is fairly common to assume that the power consumption of the loads in such a network can be modeled stochastically with a priori known statistics, such as, mean and variance extracted from long term observations~\cite{Loparo1985,Brucoli19909,Wu1986}. When the load variations are ``small enough'', local generators meet these demand variations. These variations shift the generators operating points, and consequently, change their model parameters. If the loads are modeled as impedances, they change the system model by changing the transmission line impedances.  As power networks are typically implemented over a vast geographical area, it is inefficient or even impossible to gather all these model information variations or to identify all the parameters globally. Even if we could gather all the information and identify the whole system based on them, it might take very long and by then the information might be outdated (noting that the model parameters vary stochastically over time). This motivates the interest in designing local controllers for these systems based on only local model information and statistical model information of the rest of the system. We revisit this power network problem in detail for a small example in the paper. A recurring example is used to explain the underlying definitions as well as the mathematical results. It is not difficult to see that similar examples can also be derived for process control, intelligent transportation, irrigation systems, and other shared infrastructures.

\subsection{Related Studies}
Optimal control design under limited model information has recently attracted attention. The authors in~\cite{langbort2010distributed} introduced control design strategies as mappings from the set of plants of interest to the set of eligible controllers. They studied the quality of these control design strategies using a performance metric called the competitive ratio; i.e., the worst case ratio of the closed-loop performance of a given control design strategy to the closed-loop performance of the optimal control design with full model information. Clearly, the smaller the competitive ratio is, the more desirable the control design strategy becomes since it can closely replicate the performance of the optimal control design strategy with full model information while only relying on local plant model information. They showed that for discrete-time systems composed of scalar subsystems, the deadbeat control design strategy is a minimizer of the competitive ratio. Additionally, the deadbeat control design strategy is undominated; i.e., there is no other control design strategy that performs always better while having the same competitive ratio. This work was later generalized to limited model information control design methods for inter-connected linear time-invariant systems of arbitrary order in~\cite{FLJ2012}. In that study, the authors investigated the best closed-loop performance that is achievable by structured static state-feedback controllers based on limited model information. It was shown that the result depends on the subsystems interconnection pattern and availability of state measurements. Whenever there is no subsystem that cannot affect any other subsystem and each controller has access to at least the state measurements of its neighbors, the deadbeat strategy is the best limited model information control design method. However, the deadbeat control design strategy is dominated (i.e., there exists another control design strategy that outperforms it while having the same competitive ratio) when there is a subsystem that cannot affect any other subsystem. These results were generalized to structured dynamic controllers when the closed-loop performance criterion is set to be the $H_2$-norm of the closed-loop transfer function~\cite{farokhi2011dynamic}. In this case, the optimal control design strategy with limited model information is static even though the optimal structured state-feedback controller with full model information is dynamic~\cite{Swigart5717538,Shah5718116}. Later in~\cite{FarokhiSIAM13}, the design of dynamic controllers for optimal disturbance accommodation was discussed. It was shown that in some cases an observer-based-controller is the optimal architecture also under limited model information. Finally, in~\cite{farokhi2013optimal}, it was shown that using an adaptive control design strategy, the designer can achieve a competitive ratio equal to one when the considered plant model belongs to a compact set of linear time-invariant systems and the closed-loop performance measure is the ergodic mean of a quadratic function of the state and control input (which is a natural extension of the $H_2$-norm of the closed-loop system considering that the closed-loop system in this case is nonlinear due to the adaptive controller).

In all these studies, the model information of other subsystems are assumed to be completely unknown which typically results in conservative controllers because it forces the designer to study the worst-case behavior of the control design methods. In this paper, we take a new approach by assuming that a statistical model is available for the parameters of the other subsystems. There have been many studies of optimal control design for linear discrete-time systems with stochastically-varying parameters~\cite{Koning1982,DeKoning1984,aoki1967optimization,doi:10.1080/00207178408933286,Imer2006}. In these papers, the optimal controller is typically calculated as a function of model parameter statistics. Considering a different problem formulation, in this paper, we assume each controller design is done using the exact model information of its corresponding subsystem and the other subsystems' model statistics.

Note that studying the worst-case behavior of the system using the competitive ratio is not the only approach for optimal control design under limited model information. For instance, the authors in~\cite{ando1963near,sezer1986nested,sethi1998near} developed methods for designing near-optimal controllers using only local model information whenever the coupling between the subsystems is negligible. However, not even the closed-loop stability can be guaranteed when the coupling grows. As a different approach, in a recent study~\cite{DerooCDC2012}, the authors used an iterative numerical optimization algorithm to solve a finite-horizon linear quadratic problem in a distributed way using only local model information and communication with neighbors. However, this approach (and similarly~\cite{Mrtensson5400233,dunbar2007distributed}) require many rounds of communication between the subsystems to converge to a reasonable neighborhood of the optimal controller. To the best of our knowledge, there is also no stopping criteria (for terminating the numerical optimization algorithm) that uses only local information. There have been some studies in developing stopping criteria but these studies require global knowledge of the system~\cite{Giselsson5717026,farokhi2012distributed}.
Recently, there has been an attempt for designing optimal controllers using only local model information for linear systems with stochastically-varying parameters~\cite{MishraCDC2012}. However, that setup is completely different from the problem that is considered in this paper. First, the authors of~\cite{MishraCDC2012} considered the case where the $B$-matrix was parameterized with stochastic variables but in our setup the $A$-matrix is assumed to be stochastic. Additionally, in~\cite{MishraCDC2012}, the infinite-horizon problem was only considered for the case of two subsystems, while here we present all the results for arbitrary number of subsystems. In this paper, we introduce the concept of performance degradation ratio as a measure to study the value information in optimal control design. Furthermore, the proof techniques are different since the authors of~\cite{MishraCDC2012} use a team-theoretic approach to solve the problem opposed to the approach presented in this paper.

\subsection{Main Contribution}
The main contribution of this paper is to study the value of plant model information available in the control design process. To do so, we consider limited model information control design for discrete-time linear systems with stochastically-varying parameters. First, in Theorem~\ref{tho:1}, we design the optimal finite-horizon controller based on exact local model information and global model parameter statistics. We generalize these results to infinite-horizon cost functions in Theorem~\ref{tho:2} assuming that the underlying system is mean square stabilizable; i.e., there exists a constant matrix that can mean square stabilizes the system~\cite{Koning1982}. However, in Corollary~\ref{cor:1}, we partially relax the assumptions of Theorem~\ref{tho:2} to calculate the infinite-horizon optimal controller whenever the underlying system is mean square stabilizable under limited model information. This new concept is defined through borrowing the idea of control design strategies from~\cite{langbort2010distributed,FLJ2012}. We define a special class of control design strategies to construct time-varying control gains for each subsystem. We say that a system is mean square stabilizable under limited model information if the intersection of this special class of control design strategies (that use only local model information) and the set of mean square stabilizing control design strategies is nonempty; i.e., there exists a control design strategy that uses only local model information and it can mean square stabilizes the system (see Definition~\ref{def:2} for more details).

Using the closed-loop performance of the optimal controller with limited model information, we study the effect of lack of full model information on the closed-loop performance. Specifically, we study the ratio of the cost of the optimal control design strategy with limited model information scaled by the cost of the optimal control design strategy with full model information (which is introduced in Theorems~\ref{tho:3} and~\ref{tho:4} for finite-horizon and infinite-horizon cost functions, respectively). We call the supremum of this ratio over the set of all initial conditions, the performance degradation ratio. In Theorem~\ref{tho:upper}, we find an upper bound for the performance degradation ratio assuming the underlying systems are fully-actuated (i.e., they have the same number of inputs as the state dimension). As a future direction for research, one might be able to generalize these results to designing structured state-feedback controllers following the same line of reasoning as in~\cite{swigart2010explicit}.

An early and brief version of the paper was presented as~\cite{FJCDC2012}. The current paper is a considerable extension of~\cite{FJCDC2012} as the results have been generalized, a new literature survey has been included, and a power network example has been introduced to  illustrate concepts and results throughout the paper.

\subsection{Paper Outline}
The rest of the paper is organized as follows. We start with introducing the system model in Section~\ref{sec:modelling}. In Section~\ref{sec:Optimal Control Design}, we design optimal controller for each subsystem based on limited model information (i.e., using its own model information and the statistical belief about the other subsystems). We start by the finite-horizon optimal control problem and then generalize the results to infinite-horizon cost functions. In Section~\ref{sec:fullmodelinfo}, we introduce the optimal controller for both finite-horizon and infinite-horizon cost functions when using the full model information. In Section~\ref{sec:Competitive Ratio}, we study the value of plant model information in optimal control design using the performance degradation ratio. Finally, the conclusions and directions for future research are presented in Section~\ref{sec:con}.

\subsection{Notation}
The sets of integers and reals are denoted by $\mathbb{Z}$ and $\mathbb{R}$, respectively. We denote all other sets with calligraphic letters such as $\mathcal{A}$ and $\mathcal{X}$. Specifically, we define $\mathcal{S}_{++}^n$ ($\mathcal{S}_{+}^n$) as the set of all symmetric matrices in $\mathbb{R}^{n\times n}$ that are positive definite (positive semidefinite). Matrices are denoted by capital roman letters such as $A$. We use the notation $A_{ij}$ to denote a submatrix of matrix $A$ (its dimension and position will be defined in the text). The entry in the $i^{\textrm{th}}$ row and the $j^{\textrm{th}}$ column of the matrix $A$ is denoted $a_{ij}$.  We define $A > (\geq) 0$ as $A\in \mathcal{S}_{++}^n(\mathcal{S}_{+}^n)$ and $A > (\geq) B$ as $A-B > (\geq) 0$. Let $A\otimes B\in\mathbb{R}^{np\times qm}$ denote the Kronecker product between matrices $A\in\mathbb{R}^{n\times m}$ and $B\in\mathbb{R}^{p\times q}$; i.e.,
$$
A\otimes B=\matrix{ccc}{ a_{11}B & \cdots & a_{1m}B \\ \vdots & \ddots & \vdots \\ a_{n1}B & \cdots & a_{nm}B }.
$$
For any positive integers $n$ and $m$, we define the mapping $\vect:\mathbb{R}^{n\times m} \rightarrow \mathbb{R}^{nm}$ as $\vect(A)=[A_{1}^\T\;A_{2}^\T\; \cdots A_{m}^\T]^\T$
where $A_{i}$, $1\leq i\leq m$, denotes the $i^{\textrm{th}}$ column of $A$. The mapping $\vect^{-1}:\mathbb{R}^{nm}\rightarrow \mathbb{R}^{n\times m}$ is the inverse of $\vect(\cdot)$, where the dimension of the matrix will be clear from the context. It is useful to note that both $\vect$ and $\vect^{-1}$ are linear operators.  Finally, for any given positive integers $n$ and $m$, we define the discrete Riccati operator $\mathbf{R}:\mathbb{R}^{n\times n}\times \mathcal{S}_{+}^{n} \times \mathbb{R}^{n\times m} \times \mathcal{S}_{++}^{m} \rightarrow \mathcal{S}_{+}^{n}$ as
$\mathbf{R}(A,P,B,R)=A^\T (P-P B (R+B^\T P B)^{-1} B^\T P) A$ for any $A\in\mathbb{R}^{n\times n}$, $P\in\mathcal{S}_{+}^{n}$, $B\in\mathbb{R}^{n\times m}$, and $R\in\mathcal{S}_{++}^{m}$.

\section{Control Systems with Stochastically-Varying Parameters} \label{sec:modelling}
Consider a discrete-time linear system with stochastically-varying parameters composed of $N$~subsystems with each subsystem represented in state-space form as
\begin{equation} \label{eqn:eachsubsystem}
x_i(k+1)=\sum_{j=1}^N A_{ij}(k) x_j(k)+B_{ii}(k) u_i(k),
\end{equation}
where $x_i(k)\in\mathbb{R}^{n_i}$ and $u_i(k)\in\mathbb{R}^{m_i}$ denote subsystem~$i$, $1\leq i\leq N$, state vector and control input, respectively.

\begin{remark} Linear systems with stochastically-varying parameters have been studied in many applications including power networks~\cite{Loparo1985,Brucoli19909}, process control~\cite{doi:10.1080/00207178908559618}, finance~\cite{Dombrovskii2003Lyashenko}, and networked control~\cite{schenato2007foundations,Imer2006}. Various system theoretic properties and control design methods have been developed for these systems~\cite{Koning1982,DeKoning1984,aoki1967optimization,doi:10.1080/00207178408933286}.
\end{remark}

We make the following two standing assumptions:

\begin{assumption} \label{assmp:iid} The submatrices $A_{ij}(k)$, $1\leq i,j\leq N$, are independently distributed random variables in time; i.e., $\mathbb{P}\{A_{ij}(k_1) \in\mathcal{X}\,|\,A_{ij}(k_2)\}=\mathbb{P}\{A_{ij}(k_1)\in\mathcal{X}\}$ for any $\mathcal{X}\subseteq\mathbb{R}^{n_i\times n_j}$ whenever~$k_1\neq k_2$.
\end{assumption}

\begin{assumption} \label{assum:subsystemindependence} The subsystems are statistically independent of each other; i.e., $\mathbb{P}\{A_{i j}(k)\in\mathcal{X} \,|\, A_{i' j'}(k)\}\linebreak =\mathbb{P}\{A_{i j}(k)\in\mathcal{X}\}$ for any $\mathcal{X}\subseteq\mathbb{R}^{n_i\times n_j}$ and $1\leq j,j' \leq N$ whenever $i\neq i'$.
\end{assumption}

We illustrate these properties on a small power network example. We will frequently revisit this example to demonstrate the developed results as well as their implications.

\begin{figure}
\centering
\includegraphics[width=0.3\linewidth]{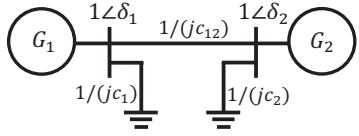}
\caption{\label{figure1} Schematic diagram of the power network in Example~\ref{example:1}. }
\end{figure}

\setcounter{example}{0}
\begin{example} \label{example:1} 
Let us consider the power network composed of two generators shown in Figure~\ref{figure1} from~\cite[pp.\;64--65]{Ghandhari00}, see also~\cite{kundur1994power}. We can model this power network as
\begin{equation*}
\begin{split}
\dot{\delta}_1(t)&=\omega_1(t), \\
\dot{\omega}_1(t)&=\frac{1}{M_1}\big[P_{1}(t)-c_{12}^{-1} \sin(\delta_1(t)-\delta_2(t)) -c_{1}^{-1}\sin(\delta_1(t))-D_1\omega_1(t) \big], \\
\dot{\delta}_2(t)&=\omega_2(t), \\
\dot{\omega}_2(t)&=\frac{1}{M_2}\big[P_{2}(t)-c_{12}^{-1}\sin(\delta_2(t)-\delta_1(t)) -c_{2}^{-1}\sin(\delta_2(t))-D_2\omega_2(t) \big],
\end{split}
\end{equation*}
where $\delta_i(t)$, $\omega_i(t)$, and $P_{i}(t)$ are, respectively, the phase angle of the terminal voltage of  generator~$i$,  its rotation frequency, and its input mechanical power. We assume that $P_1(t)=1.6+v_1(t)$ and $P_2(t)=1.2+v_2(t)$, where $v_1(t)$ and $v_2(t)$ are the continuous-time control inputs of this system. The power network parameters can be found in Table~\ref{table:1} (see~\cite{Ghandhari00,kundur1994power} and references therein for a discussion on these parameters). Now, we can find the equilibrium point $(\delta_1^*,\delta_2^*)$ of this system and linearize it around this equilibrium. Let us discretize the linearized system by applying Euler's constant step scheme with sampling time $\Delta T=300\,\mathrm{ms}$, which results in
\begin{equation*}
\begin{split}
\hspace{-.04in}\matrix{c}{ \hspace{-.08in} \Delta\delta_1(k+1) \hspace{-.08in} \\ \hspace{-.08in} \Delta\omega_1(k+1) \hspace{-.08in} \\ \hspace{-.08in} \Delta\delta_2(k+1) \hspace{-.08in} \\ \hspace{-.08in} \Delta\omega_2(k+1) \hspace{-.08in}} \hspace{-.06in}&=\hspace{-.06in}\matrix{cccc}{1 & \Delta T & 0 & 0 \\ \hspace{-.1in}\xi_1\hspace{-.06in} & 1-\frac{\Delta TD_1}{M_1} & \frac{\Delta T\cos(\delta_1^*-\delta_2^*)}{c_{12}M_1} & 0 \\ 0 & 0 & 1 & \Delta T \\ \frac{\Delta T\cos(\delta_2^*-\delta_1^*)}{c_{12}M_2} & 0 & \hspace{-.1in}\xi_2 & \hspace{-.06in}1-\frac{\Delta TD_2}{M_2}}\hspace{-.08in}
\matrix{c}{ \hspace{-.08in} \Delta\delta_1(k) \hspace{-.08in} \\ \hspace{-.08in} \Delta\omega_1(k) \hspace{-.08in} \\ \hspace{-.08in} \Delta\delta_2(k) \hspace{-.08in} \\ \hspace{-.08in} \Delta\omega_2(k) \hspace{-.08in}}\hspace{-.06in}+\hspace{-.06in}\matrix{cc}{0 \\ \hspace{-.08in}u_1(k)\hspace{-.08in} \\ 0  \\ \hspace{-.08in}u_2(k)\hspace{-.08in}}\hspace{-.05in},
\end{split}
\end{equation*}
with $\xi_1\hspace{-.03in}=\hspace{-.03in}-\Delta T( c_{12}^{-1}\hspace{-.03in}\cos(\delta_1^*\hspace{-.03in}-\hspace{-.03in}\delta_2^*)+c_1^{-1}\cos(\delta_1^*))/M_1$ and $\xi_2\hspace{-.03in}=\hspace{-.03in}-\Delta T(c_{12}^{-1}\hspace{-.03in}\cos(\delta_2^*\hspace{-.03in}-\hspace{-.03in}\delta_1^*)+c_2^{-1}\cos(\delta_2^*))/M_2$, where $\Delta\delta_1(k)$, $\Delta\delta_2(k)$, $\Delta\omega_1(k)$, and $\Delta\omega_2(k)$ denote the deviation of $\delta_1(t)$, $\delta_2(t)$, $\omega_1(t)$, and $\omega_2(t)$ from their equilibrium points at time instances $t=k\Delta T$. Additionally, let the actuators be equipped with a zero order hold unit which corresponds to $v_i(t)=u_i(k)$ for all $k\Delta T\leq t<(k+1)\Delta T$. Let us assume that we have connected impedance loads to each generator locally, such that the parameters $c_1$ and $c_2$ vary stochastically over time according to the load profiles. Furthermore, assume that each generator changes its input mechanical power according to these local load variations (to meet their demand and avoid power shortage). Doing so, we would not change the equilibrium point $(\delta_1^*,\delta_2^*)$. For this setup, we can model the system as a discrete-time linear system with stochastically-varying parameters 
$$
x(k+1)=A(k)x(k)+Bu(k), 
$$
where
$$
x(k)=\matrix{c}{ \Delta\delta_1(k) \\ \Delta\omega_1(k) \\ \Delta\delta_2(k) \\ \Delta\omega_2(k) }\hspace{-.05in},
\hspace{.4in} u(k)=\matrix{cc}{ u_1(k) \\ u_2(k) }\hspace{-.05in}, \hspace{.4in}
B=\matrix{cccc}{0 & 0 \\ 1 & 0 \\ 0 & 0 \\ 0 & 1}\hspace{-.05in},
$$
and
$$
A(k)=\matrix{cccc}{
    1.0000  &  0.3000 &        0  &       0\\
  -45.6923-6.9297\alpha_1(k)  &  0.9250 &  29.3953  &       0\\
         0  &       0 &   1.0000  &  0.3000\\
   23.5163  &       0 & -37.3757 -8.1485\alpha_2(k)  &  0.9400}\hspace{-.05in},
$$
where $\alpha_i(k)$, $i=1,2$, denotes the deviation of the admittance $c_i^{-1}$ from its nominal value in Table~\ref{table:1}. Let us assume that $\alpha_1(k)$ and $\alpha_2(k)$ are independently and identically distributed random variables in time with $\alpha_1(k)\sim\mathcal{N}(0,0.1)$ and $\alpha_2(k)\sim\mathcal{N}(0,0.3)$. Note that in this example, $\alpha_i(k)$ is a stochastically-varying parameter of subsystem~$i$ describing the dynamics of the local power consumption. It only appears in the model of subsystem~$i$; i.e., in $\{A_{ij}(k)|1\leq j\leq N\}$. In the rest of the paper when discussing this example and for designing controller~$i$, we assume that we only have access to the exact realization of $\alpha_i(k)$ in addition to the statistics of the other subsystem. This is motivated by the fact that the controller of the other generator might not have access to this model information. 
\hfill\newqed
\end{example}

\renewcommand{\arraystretch}{1.3}
\begin{table}
\footnotesize
\caption{Nominal values of power system parameters in Example~\ref{example:1}. }
\label{table:1}
\centering
\begin{tabular}{|c||c|c|c|c|c|c|c|} \hline
Parameters            & $M_1$                & $M_2$ & $c_{12}$ & $c_1$ & $c_2$ & $D_1$ & $D_2$ \\ \hline \hline Nominal Value (p.u.) & $2.6 \times 10^{-2}$ & $3.2 \times 10^{-2}$ & $0.40$ & $0.50 $ & $0.50$ &  $6.4\times 10^{-3}$ & $6.4\times 10^{-3}$ \\ \hline
\end{tabular}
\end{table}

We define the concatenated system from~(\ref{eqn:eachsubsystem}) as 
\begin{equation} \label{eqn:sys}
x(k+1)=A(k)x(k)+B(k)u(k), 
\end{equation}
where
$
x(k)=[x_1(k)^\top \; \cdots \; x_N(k)^\top]^\top \in\mathbb{R}^{n}$ and $u(k)=[u_1(k)^\top \; \cdots \; u_N(k)^\top]^\top \in\mathbb{R}^{m},
$
with $n=\sum_{i=1}^N n_i$ and $m=\sum_{i=1}^N m_i$. Let $x_0=x(0)$. We also use the notations $\bar{A}_{ij}(k)=\mathbb{E}\{A_{ij}(k)\}$, $\tilde{A}_{ij}(k)=A_{ij}(k)-\bar{A}_{ij}(k)$, $\bar{A}(k)=\mathbb{E}\{A(k)\}$, and $\tilde{A}(k)=A(k)-\bar{A}(k)$. Furthermore, for all $1\leq i\leq N$, we introduce the notations
$$
B_{i}(k)=\left[\hspace{-.06in}\begin{array}{c}0_{(\sum_{j=1}^{i-1} n_j) \times m_i} \\ B_{ii}(k) \\ 0_{(\sum_{j=i+1}^{N} n_j) \times m_i} \end{array}\hspace{-.06in}\right]\hspace{-.05in}, \hspace{0.1in} \tilde{A}_i(k)=\left[\hspace{-.06in}\begin{array}{ccc}0_{(\sum_{j=1}^{i-1} n_j) \times n_1} & \cdots & 0_{(\sum_{j=1}^{i-1} n_j) \times n_N}\\ \tilde{A}_{i1}(k) & \cdots & \tilde{A}_{iN}(k) \\ 0_{(\sum_{j=i+1}^{N} n_j) \times n_1} & \cdots & 0_{(\sum_{j=i+1}^{N} n_j) \times n_N} \end{array}\hspace{-.06in}\right]\hspace{-.05in}.
$$
Now, we are ready to calculate the optimal controller under model information constraints.

\section{Optimal Control Design with Limited Model Information} \label{sec:Optimal Control Design}
In this section, we study the finite-horizon and infinite-horizon optimal control design using exact local model information and statistical beliefs about other subsystems. We consider state-feedback control laws $u_i(k)=F_i(x(0),\dots,x(k))$ where in the design of $F_i$ only limited model information is available about the overall system~(\ref{eqn:sys}). We formalize the notion of what model information is available in the design of controller $i$, $1\leq i\leq N$, through the following definition.

\begin{definition} \label{assmp:3} The design of controller~$i$, $1\leq i\leq N$, has limited model information if (\emph{a})~the exact local realizations $\{A_{ij}(k) \;|\; 1\leq j\leq N,\forall k\}$ are available together with (\emph{b})~the first- and the second-order moments of the system parameters (i.e., $\mathbb{E}\{A(k)\}$ and $\mathbb{E}\{\tilde{A}(k)\otimes \tilde{A}(k)\}$ for all $k$).
\end{definition}

\begin{remark} Note that the assumption that the exact realizations $\{A_{ij}(k) \;|\; 1\leq j\leq N\}$ are available to designer of controller $i$ (and not the rest of the submatrices) is reasonable in the context of interconnected systems where the coupling strengths are known (stochastically-varying or not) and the uncertainties are arising in each subsystem independently. For instance, such systems occur naturally when studying power network control since the power grid, which determines the coupling strengths between the generators and the consumers, is typically accurately modeled, however, the loads and the generators are stochastically varying and uncertain. A direction for future research could be to consider the case where also the coupling strengths are uncertain. 
\end{remark} 

\subsection{Finite-Horizon Cost Function} \label{subsec:FHCF}
In the finite-horizon optimal control design problem, for a fixed $T>0$, we minimize the cost function \begin{equation} \label{eqn:newlabel}
\begin{split}
J_T(x_0,&\{u(k)\}_{k=0}^{T-1})=\mathbb{E}\bigg\{x(T)^\T Q(T)x(T) +\sum_{k=0}^{T-1} \bigg(x(k)^\T Q(k) x(k)+ \sum_{j=1}^N u_j(k)^\T R_{jj}(k)u_j(k) \bigg) \bigg\},
\end{split}
\end{equation}
subject to the system dynamics in~(\ref{eqn:sys}) and the model information constraints in Definition~\ref{assmp:3}. In~(\ref{eqn:newlabel}), we assume that $Q(k)\in\mathcal{S}_{+}^n$ for all $0\leq k\leq T$ and $R(k)=\diag(R_{11}(k),\dots,R_{NN}(k))\in\mathcal{S}_{++}^m$ for all $0\leq k\leq T-1$. The following theorem presents the solution of the finite-horizon optimal control problem.

\begin{theorem} \label{tho:1} The solution of the finite-horizon optimal control design problem with limited model information is given by
\begin{equation} \label{eqn:FHOC}
\begin{split}
u(k)=&-(R(k)+B(k)^\T P(k+1)B(k))^{-1}B(k)^\T P(k+1)\bar{A}(k)x(k)\\ &-\matrix{c}{(R_{11}(k)+B_1(k)^\T P(k+1)B_1(k))^{-1}B_1(k)^\T P(k+1)\tilde{A}_1(k)\\ \vdots \\ (R_{NN}(k)+B_N(k)^\T P(k+1)B_N(k))^{-1}B_N(k)^\T P(k+1)\tilde{A}_N(k)}x(k),
\end{split}
\end{equation}
where the sequence of matrices $\{P(k)\}_{k=0}^T$ can be calculated using the backward difference equation
\begin{equation} \label{eqn:P}
\begin{split}
P(k)=Q(k)+\mathbf{R}(\bar{A}(k),P(k+1),B(k),R)+\sum_{i=1}^N \mathbb{E} \left\{\mathbf{R}(\tilde{A}_i(k),P(k+1),B_i(k),R_{ii}) \right\},
\end{split}
\end{equation}
with the boundary condition $P(T)=Q(T)$. Furthermore,
$\inf_{\{u(k)\}_{k=0}^{T-1}} J_T(x_0,\{u(k)\}_{k=0}^{T-1})=x_0^\T P(0) x_0$.
\end{theorem}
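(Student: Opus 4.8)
The plan is to prove the statement by backward dynamic programming, the decisive structural step being a decomposition of each stage-wise minimization into one \emph{mean} subproblem that is shared by all subsystems and $N$ independent \emph{fluctuation} subproblems, one per subsystem. Before invoking the Bellman recursion I would use Assumption~\ref{assmp:iid} to establish that the problem is Markovian: since $A(k)$ is independent of $\{A(t)\}_{t<k}$ while $x(k)$ is a deterministic function of $x_0$ and $\{A(t),u(t)\}_{t<k}$, the matrix $\tilde{A}(k)$ is independent of $x(k)$, and therefore the optimal cost-to-go from stage $k$ depends on the past only through $x(k)$. I would then posit $V_k(x)=x^\T P(k)x$ and argue by induction backward from the base case $V_T(x)=x^\T Q(T)x=x^\T P(T)x$, where the admissible decision $u_i(k)$ is any map measurable with respect to $x(k)$ and the locally available $\tilde{A}_i(k)$, as dictated by Definition~\ref{assmp:3}.

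For the inductive step I would write the Bellman equation and split each input as $u_i(k)=\bar{u}_i+\check{u}_i$ with $\bar{u}_i=\mathbb{E}\{u_i(k)\}$ and $\mathbb{E}\{\check{u}_i\}=0$, so that, using $\tilde{A}(k)=\sum_{i=1}^N\tilde{A}_i(k)$ and $B(k)u(k)=\sum_{i=1}^N B_i(k)u_i(k)$, the successor state reads $x(k+1)=\mu+\sum_{i=1}^N \nu_i$ with deterministic part $\mu=\bar{A}(k)x+B(k)\bar{u}$ and zero-mean parts $\nu_i=\tilde{A}_i(k)x+B_i(k)\check{u}_i$. The key observation is that by Assumption~\ref{assum:subsystemindependence} the $\nu_i$ are mutually independent and zero-mean, so all cross terms in $\mathbb{E}\{x(k+1)^\T P(k+1)x(k+1)\}$ vanish, leaving $\mu^\T P(k+1)\mu+\sum_i \mathbb{E}\{\nu_i^\T P(k+1)\nu_i\}$; together with $\sum_i\mathbb{E}\{u_i^\T R_{ii}u_i\}=\bar{u}^\T R\bar{u}+\sum_i\mathbb{E}\{\check{u}_i^\T R_{ii}\check{u}_i\}$ this decouples the minimization into the mean subproblem and the $N$ fluctuation subproblems.

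Each subproblem is a standard quadratic minimization that I would solve by completing the square. The mean subproblem $\min_{\bar{u}}\,[\bar{u}^\T R\bar{u}+(\bar{A}x+B\bar{u})^\T P(k+1)(\bar{A}x+B\bar{u})]$ has minimizer $\bar{u}^\star=-(R+B^\T P(k+1)B)^{-1}B^\T P(k+1)\bar{A}x$ and value $x^\T\mathbf{R}(\bar{A}(k),P(k+1),B(k),R)x$, whose $i$-th block row reproduces the first term of~(\ref{eqn:FHOC}). For each fluctuation subproblem I would minimize pointwise in the realization of $\tilde{A}_i(k)$, obtaining $\check{u}_i^\star=-(R_{ii}+B_i^\T P(k+1)B_i)^{-1}B_i^\T P(k+1)\tilde{A}_i(k)x$ and value $x^\T\mathbb{E}\{\mathbf{R}(\tilde{A}_i(k),P(k+1),B_i(k),R_{ii})\}x$; here I would check that this pointwise minimizer is admissible, since it uses only the local $\tilde{A}_i(k)$, and that it automatically satisfies the constraint $\mathbb{E}\{\check{u}_i^\star\}=0$ because $\mathbb{E}\{\tilde{A}_i(k)\}=0$, so the unconstrained minimization is legitimate. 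Adding the $x^\T Q(k)x$ term and collecting the three value contributions gives exactly the recursion~(\ref{eqn:P}), completing the induction and yielding $\inf J_T=V_0(x_0)=x_0^\T P(0)x_0$.

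I expect the main obstacle to be the decoupling step: making rigorous that the stage-wise problem separates across the mean and the $N$ local fluctuations, and that the resulting per-subsystem minimizers respect the information constraint of Definition~\ref{assmp:3}. This hinges essentially on both standing assumptions---Assumption~\ref{assmp:iid} supplies the Markov property that makes dynamic programming applicable and keeps $x(k)$ independent of $\tilde{A}(k)$, while Assumption~\ref{assum:subsystemindependence} is precisely what annihilates the cross terms $\mathbb{E}\{\nu_i^\T P(k+1)\nu_j\}$ for $i\neq j$. A minor but necessary bookkeeping point is well-posedness of the gains: since $R\in\mathcal{S}_{++}^m$ and the Riccati operator $\mathbf{R}$ returns elements of $\mathcal{S}_{+}^n$, one has $P(k+1)\succeq 0$ and hence $R+B^\T P(k+1)B\succ0$, and likewise for each block, so all the indicated inverses exist at every stage.
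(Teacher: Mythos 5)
Your proposal is correct and follows essentially the same route as the paper's proof: backward dynamic programming with a quadratic value function, decomposition of each admissible input into its conditional-mean and zero-mean fluctuation parts, cancellation of all cross terms via Assumptions~\ref{assmp:iid} and~\ref{assum:subsystemindependence}, and completion of squares in the resulting decoupled mean and per-subsystem problems. The only cosmetic difference is that the paper centers the decomposition at the candidate optimal gains (its $\bar{G}(k)$ and $\tilde{g}_i(k)$ are your $\bar{u}-\bar{K}(k)x(k)$ and $\check{u}_i-\tilde{K}_i(k)x(k)$) and exhibits optimality as a lower bound attained by that candidate, whereas you minimize each decoupled subproblem directly.
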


\begin{proof} We solve the finite-horizon optimal control problem using dynamic programming
\begin{equation} \label{eqn:rec}
\begin{split}
V_k(x(k))=\inf_{u(k)} &\mathbb{E}\bigg\{x(k)^\T Q(k)x(k)\hspace{-.04in}+\hspace{-.04in}u(k)^\T R(k)u(k) \hspace{-.04in}+\hspace{-.04in} V_{k+1}(A(k)x(k)+B(k)u(k))\big|x(k) \bigg\},
\end{split}
\end{equation}
where $V_T(x(T))=x(T)^\T Q(T) x(T)$. The proof strategy is to (\textit{a}) show $V_k(x(k))=x(k)^\T P(k)x(k)$ for all $k$ using backward induction, (\textit{b}) find a lower bound for $\mathbb{E}\{x(k)^\T Q(k)x(k)+u(k)^\T R(k)u(k)\linebreak+V_{k+1}(A(k)x(k)+B(k)u(k))\big|x(k)\}$ which is attained by $u(k)$ in~\eqref{eqn:FHOC}, and (\textit{c}) using optimal controller calculate a recursive equation for $P(k)$, $0\leq k\leq T$, starting from $P(T)=Q(T)$. Note that because of Definition~\ref{assmp:3}, in each step of the dynamic programming, the infimum is taken over the set of all control signals $u(k)$ of the form
\begin{equation} \label{control:form}
\matrix{c}{u_1(k) \\ \vdots \\ u_N(k)}=\matrix{c}{\psi_1(A_{11}(k),\dots,A_{1N}(k);x(0),\cdots,x(k)) \\ \vdots \\ \psi_N(A_{N1}(k),\dots,A_{NN}(k);x(0),\cdots,x(k))},
\end{equation}
where $\psi_i:\mathbb{R}^{n_i\times n_1}\times \cdots \times \mathbb{R}^{n_i\times n_N}\times \mathbb{R}^n\rightarrow \mathbb{R}^{m_i}$, $1\leq i\leq N$, can be any mapping (i.e., it is not necessarily a linear mapping, a smooth one, etc). Let us assume, for all $k$, that $V_{k}(x(k))=x(k)^\T P(k) x(k)$
where $P(k)\in\mathcal{S}_+^n$. This is without loss of generality since $V_T(x(T))= x(T)^\T Q(T) x(T)$ is a quadratic function of the state vector $x(T)$ and using dynamic programming, $V_k(x(k))$ remains a quadratic function of $x(k)$ if $V_{k+1}(x(k+1))$ is a quadratic function of $x(k+1)$ and $u(k)$ is a linear function of $x(k)$. This can be easily proved using mathematical induction. For the control input of the form in~\eqref{control:form}, we define 
$$
\bar{G}(k)=\matrix{c}{\mathbb{E}\left\{\psi_1(A_{11}(k),\dots,A_{1N}(k);x(0),\cdots,x(k))|x(k)\right\} \\ \vdots \\ \mathbb{E}\left\{\psi_N(A_{N1}(k),\dots,A_{NN}(k);x(0),\cdots,x(k))|x(k)\right\}}-\bar{K}(k)x(k), 
$$
and 
\begin{equation*}
\begin{split}
\tilde{g}_i(k)=&\psi_i(A_{i1}(k),\dots,A_{iN}(k);x(0),\cdots,x(k)) \\&- \mathbb{E}\{\psi_i(A_{i1}(k),\dots,A_{iN}(k);x(0),\cdots,x(k)) |x(k)\}-\tilde{K}_i(k)x(k), 
\end{split}
\end{equation*}
where $\bar{K}(k)=-(R(k)+B(k)^\T P(k+1)B(k))^{-1} B(k)^\T P(k+1)\bar{A}(k)$ and $\tilde{K}_i(k)=-(R_{ii}(k)+B_i(k)^\T P(k+1)B_i(k))^{-1}B_i(k)^\T P(k+1)\tilde{A}_i(k)$ are the gains in~\eqref{eqn:FHOC}. By definition, we have $\mathbb{E}\{\tilde{g}_i(k)|x(k)\}=0$. Furthermore, let us define the notation 
$$
C_{i}=\left[\hspace{-.06in}\begin{array}{c}0_{(\sum_{j=1}^{i-1} m_j) \times m_i} \\ I \\ 0_{(\sum_{j=i+1}^{N} m_j) \times m_i} \end{array}\hspace{-.06in}\right],
$$
for all $1\leq i\leq N$. Evidently, we have 
\begin{small}
\begin{equation*}
\begin{split}
\matrix{c}{\psi_1(A_{11}(k),\dots,A_{1N}(k);x(0),\cdots,x(k)) \\ \vdots \\ \psi_N(A_{N1}(k),\dots,A_{NN}(k);x(0),\cdots,x(k))}&=
\bar{G}(k)+\matrix{c}{\tilde{g}_1(k) \\ \vdots \\ \tilde{g}_N(k)}+\bar{K}(k)x(k)+\matrix{c}{\tilde{K}_1(k)x(k) \\ \vdots \\ \tilde{K}_N(k)x(k)}\\
&=\bar{G}(k)+\bar{K}(k)x(k)+\sum_{i=1}^N C_i\tilde{g}_i(k)+\sum_{i=1}^N C_i\tilde{K}_i(k)x(k). 
\end{split} 
\end{equation*} 
\end{small}
By rearranging the terms, we can easily show that 
\begin{small}
\begin{equation} \label{inden:u}
\begin{split}
\mathbb{E}\left\{u(k)^\T R(k) u(k)\big|x(k) \right\}=&
\mathbb{E}\bigg\{(\bar{K}(k)x(k)+\bar{G}(k))^\T R(k) (\bar{K}(k)x(k)+\bar{G}(k)) \\ &+ (\bar{K}(k)x(k)+\bar{G}(k))^\T R(k) \left(\sum_{i=1}^N C_i(\tilde{g}_i(k)+\tilde{K}_i(k)x(k))\right)\\ &+ \left(\sum_{i=1}^N C_i(\tilde{g}_i(k)+\tilde{K}_i(k)x(k))\right)^\T R(k) (\bar{K}(k)x(k)+\bar{G}(k)) \\& +\sum_{i=1}^N\sum_{j=1}^N (\tilde{g}_i(k)+\tilde{K}_i(k)x(k))^\T C_i^\T R(k)C_j (\tilde{g}_j(k)+\tilde{K}_j(k)x(k))\big|x(k)\bigg\} \\=& (\bar{K}(k)x(k)+\bar{G}(k))^\T R(k) (\bar{K}(k)x(k)+\bar{G}(k)) \\& + \sum_{i=1}^N \mathbb{E}\bigg\{(\tilde{g}_i(k)+\tilde{K}_i(k)x(k))^\T R_{ii}(k) (\tilde{g}_i(k)+\tilde{K}_i(k)x(k))\big|x(k)\bigg\}, 
\end{split}
\end{equation} 
\end{small}
\hspace{-.1in} where the second equality holds due to that $\mathbb{E}\{\tilde{g}_i(k)+\tilde{K}_i(k)x(k)|x(k)\}=0$ and $C_i^\T RC_j=R_{ij}$ (while recalling that $R_{ij}=0$ if $i\neq j$). Following the same line of reasoning, we show that 
\begin{small}
\begin{equation*}
\begin{split}
\mathbb{E}&\big\{(A(k)x(k)+B(k)u(k))^\T P(k+1) (A(k)x(k)+B(k)u(k))\big|x(k) \big\}\\=&
\mathbb{E}\bigg\{\left(\bar{A}(k)x(k)+B(k)(\bar{K}(k)x(k)+\bar{G}(k))\right)^\T P(k+1) \left(\bar{A}(k)x(k)+B(k)(\bar{K}(k)x(k)+\bar{G}(k))\right) \\ &+ \left(\bar{A}(k)x(k)+B(k)(\bar{K}(k)x(k)+\bar{G}(k))\right)^\T P(k+1) \left(\sum_{i=1}^N \tilde{A}_i(k)x(k)+B_i(k)(\tilde{g}_i(k)+\tilde{K}_i(k)x(k))\right)\\ &+ \left(\sum_{i=1}^N \tilde{A}_i(k)x(k)+B_i(k)(\tilde{g}_i(k)+\tilde{K}_i(k)x(k))\right)^\T P(k+1) \left(\bar{A}(k)x(k)+B(k)(\bar{K}(k)x(k)+\bar{G}(k))\right) \\& +\hspace{-.04in}\sum_{i=1}^N\sum_{j=1}^N \hspace{-.04in}\left( \tilde{A}_i(k)x(k)+B_i(k)(\tilde{g}_i(k)+\tilde{K}_i(k)x(k))\right)^{\hspace{-.04in}\T}\hspace{-.07in} P(k+1)\hspace{-.04in} \left( \tilde{A}_j(k)x(k)+B_j(k)(\tilde{g}_j(k)+\tilde{K}_j(k)x(k))\right)\hspace{-.04in}\big|x(k)\bigg\},
\end{split}
\end{equation*}
\end{small}
where the equality follows from
\begin{equation*}
\begin{split}
A(k)x(k)+B(k)u(k)=&\bar{A}(k)x(k)+B(k)(\bar{K}(k)x(k)+\bar{G}(k))\\&+\sum_{i=1}^N \tilde{A}_i(k)x(k)+B_i(k)(\tilde{g}_i(k)+\tilde{K}_i(k)x(k)).
\end{split}
\end{equation*}
Therefore, we get
\begin{small}
\begin{equation} \label{inden:x}
\begin{split}
\mathbb{E}&\big\{(A(k)x(k)+B(k)u(k))^\T P(k+1) (A(k)x(k)+B(k)u(k))\big|x(k) \big\}\\=& \left(\bar{A}(k)x(k)+B(k)(\bar{K}(k)x(k)+\bar{G}(k))\right)^\T P(k+1) \left(\bar{A}(k)x(k)+B(k)(\bar{K}(k)x(k)+\bar{G}(k))\right) \\&+\hspace{-.04in} \sum_{i=1}^N \hspace{-.04in}\mathbb{E}\bigg\{\hspace{-.04in} \left( \hspace{-.04in} \tilde{A}_i(k)x(k)\hspace{-.03in}+\hspace{-.03in} B_i(k)(\tilde{g}_i(k)\hspace{-.03in}+\hspace{-.03in} \tilde{K}_i(k)x(k))\right)^{\hspace{-.04in}\T} \hspace{-.06in} P(k+1)\hspace{-.04in}\left(\hspace{-.04in} \tilde{A}_i(k)x(k)\hspace{-.03in}+\hspace{-.03in} B_i(k)(\tilde{g}_i(k)\hspace{-.03in}+\hspace{-.03in} \tilde{K}_i(k)x(k))\right)\hspace{-.04in}\big|x(k)\bigg\},
\end{split}
\end{equation}
\end{small}
\hspace{-.15in} because $\tilde{A}_i(k)x(k)+B_i(k)(\tilde{g}_i(k)+\tilde{K}_i(k)x(k))$ and $\tilde{A}_j(k)x(k)+B_j(k)(\tilde{g}_j(k)+\tilde{K}_j(k)x(k))$ are independent random variables for $i\neq j$ (see Assumption~\ref{assmp:iid} and Definition~\ref{assmp:3}) and $\mathbb{E}\{\tilde{A}_i(k)x(k)+B_i(k)(\tilde{g}_i(k)+\tilde{K}_i(k)x(k))|x(k)\}=0$ for all $1\leq i\leq N$. Now, note that
\begin{small}
\begin{equation} \label{ineq:avg}
\begin{split}
\left(\bar{A}(k)x(k)+B(k)(\bar{K}(k)x(k)+\bar{G}(k))\right)^\T &P(k+1) \left(\bar{A}(k)x(k)+B(k)(\bar{K}(k)x(k)+\bar{G}(k))\right)\\&\hspace{.8in}+(\bar{K}(k)x(k)+\bar{G}(k))^\T R(k) (\bar{K}(k)x(k)+\bar{G}(k)) 
\\ &\hspace{-2.2in}= x(k)^\T \bar{K}(k)^\T R(k) \bar{K}(k)x(k) +x(k)^\T (\bar{A}(k)+B(k)\bar{K}(k))^\T P(k+1) (\bar{A}(k)+B(k)\bar{K}(k)) x(k) \\&\hspace{-2.0in} +\bar{G}(k)^\T \left(B(k)^\T P(k+1)(\bar{A}(k)+B(k)\bar{K}(k))+R(k) \bar{K}(k) \right)x(k) \\&\hspace{-2.0in} +x(k)^\T \left(B(k)^\T P(k+1)(\bar{A}(k)+B(k)\bar{K}(k))+R(k) \bar{K}(k) \right)^\T \bar{G}(k) \\&\hspace{-2.0in} +\bar{G}(k)^\T R(k)\bar{G}(k)+\bar{G}(k)^\T B(k)^\T P(k+1) B(k) \bar{G}(k) \\ &\hspace{-2.2in}=x(k)^\T \bar{K}(k)^\T R(k) \bar{K}(k)x(k) +x(k)^\T (\bar{A}(k)+B(k)\bar{K}(k))^\T P(k+1) (\bar{A}(k)+B(k)\bar{K}(k)) x(k) \\&\hspace{-2.0in} +\bar{G}(k)^\T R(k)\bar{G}(k)+\bar{G}(k)^\T B(k)^\T P(k+1) B(k) \bar{G}(k) \\ &\hspace{-2.2in}\geq x(k)^\T \bar{K}(k)^\T R(k) \bar{K}(k)x(k) +x(k)^\T (\bar{A}(k)+B(k)\bar{K}(k))^\T P(k+1) (\bar{A}(k)+B(k)\bar{K}(k)) x(k),
\end{split}
\end{equation}
\end{small}
\hspace{-.06in}where the second equality follows from that $B(k)^\T P(k+1)(\bar{A}(k)+B(k)\bar{K}(k))+R(k) \bar{K}(k)=0$ using the definition of $\bar{K}(k)$ and the inequality holds due to that $\bar{G}(k)^\T (R(k)+ B(k)^\T P(k+1) B(k)) \bar{G}(k)\geq 0$ for any $\bar{G}(k)\in\mathbb{R}^m$ since $R(k)+ B(k)^\T P(k+1) B(k)$ is a positive-definite matrix. Similarly, for each $1\leq i\leq N$, we conclude that
\begin{small}
\begin{equation} \label{ineq:inov}
\begin{split}
&\mathbb{E}\bigg\{(\tilde{g}_i(k)+\tilde{K}_i(k)x(k))^\T R_{ii}(k) (\tilde{g}_i(k)+\tilde{K}_i(k)x(k))\\& +\left( \tilde{A}_i(k)x(k)\hspace{-.03in}+\hspace{-.03in}B_i(k)(\tilde{g}_i(k)\hspace{-.03in}+\hspace{-.03in} \tilde{K}_i(k)x(k))\right)^\T \hspace{-.04in} P(k+1)\left( \tilde{A}_i(k)x(k)\hspace{-.03in}+\hspace{-.03in} B_i(k)(\tilde{g}_i(k)\hspace{-.03in}+\hspace{-.03in} \tilde{K}_i(k)x(k))\right)\big|x(k)\bigg\}
\\
\color{red}
&=\mathbb{E}\bigg\{
\tilde{g}_i(k)^\T R_{ii}(k) \tilde{g}_i(k)+x(k)^\T\tilde{K}_i(k)^\T R_{ii}(k) \tilde{K}_i(k)x(k)+\tilde{g}_i(k)^\T B_i(k)^\T P(k+1)B_i(k)\tilde{g}_i(k)\\
&\hspace{.2in} +\tilde{g}_i(k)^\T \bigg(B_i(k)^\T P(k+1)(\tilde{A}_i(k)+B_i(k)\tilde{K}_i(k))+R_{ii}(k) \tilde{K}_i(k)\bigg)x(k) \\
&\hspace{.2in} +x(k)^\T \bigg(B_i(k)^\T P(k+1)(\tilde{A}_i(k)+B_i(k)\tilde{K}_i(k))+R_{ii}(k) \tilde{K}_i(k)\bigg)^\T\tilde{g}_i(k)\\
&\hspace{.2in} +x(k)^\T (\tilde{A}_i(k)+B_i(k)\tilde{K}_i(k))^\T P(k+1)(\tilde{A}_i(k)+B_i(k)\tilde{K}_i(k))x(k)\big|x(k)\bigg\}
\color{black}
\\& \geq \mathbb{E}\bigg\{x(k)^\T\tilde{K}_i(k)^\T R_{ii}(k) \tilde{K}_i(k)x(k) \\ & \hspace{.9in} +x(k)^\T (\tilde{A}_i(k)+B_i(k)\tilde{K}_i(k))^\T P(k+1)(\tilde{A}_i(k)+B_i(k)\tilde{K}_i(k))x(k)\big|x(k)\bigg\}.
\end{split}
\end{equation}
\end{small}
Combining identities~(\ref{inden:u})--(\ref{inden:x}) with inequalities~(\ref{ineq:avg})--(\ref{ineq:inov})  results in
\begin{small}
\begin{equation*}
\begin{split}
&\mathbb{E}\bigg\{x(k)^\T Q(k)x(k)\hspace{-.04in}+\hspace{-.04in}u(k)^\T R(k)u(k) \hspace{-.04in}+\hspace{-.04in} (A(k)x(k)+B(k)u(k))^\T P(k+1)(A(k)x(k)+B(k)u(k))\big|x(k) \bigg\}\\& \geq x(k)^\T Q(k)x(k)\hspace{-.04in}+\hspace{-.04in}x(k)^\T (\bar{K}(k)^\T R(k) \bar{K}(k)\hspace{-.04in}+\hspace{-.04in}(\bar{A}(k)+B(k)\bar{K}(k))^\T P(k+1) (\bar{A}(k)+B(k)\bar{K}(k))) x(k) \\ &\hspace{.1in} +\sum_{i=1}^N\mathbb{E}\bigg\{\hspace{-.04in}x(k)^\T\hspace{-.04in}(\tilde{K}_i(k)^\T R_{ii}(k) \tilde{K}_i(k)\hspace{-.04in}+\hspace{-.04in}(\tilde{A}_i(k)\hspace{-.04in}+\hspace{-.04in} B_i(k)\tilde{K}_i(k))^\T B_i(k)^\T P(k+1)(\tilde{A}_i(k)\hspace{-.04in}+\hspace{-.04in} B_i(k)\tilde{K}_i(k)))x(k)\big| x(k)\hspace{-.04in}\bigg\}\\&=
\mathbb{E}\bigg\{x(k)^\T Q(k)x(k)\hspace{-.04in}+\hspace{-.04in}u^*(k)^\T R(k)u^*(k) \hspace{-.04in}+\hspace{-.04in} (A(k)x(k)+B(k)u^*(k))^\T P(k+1)(A(k)x(k)\hspace{-.04in}+\hspace{-.04in} B(k)u^*(k))\big|x(k) \bigg\},
\end{split}
\end{equation*}
\end{small}
where
$$
u^*(k)=\bar{K}(k)x(k)+\sum_{i=1}^N C_i \tilde{K}_i(k)x(k).
$$
This inequality proves that $u^*(k)$ is the solution of~(\ref{eqn:rec}) since any other controller results in a larger or equal cost. By substituting this optimal controller inside the recursion~(\ref{eqn:rec}), we get the cost function update equation
\begin{equation} \label{eqn:2}
\begin{split}
x&(k)^\T P(k) x(k)=  \;x(k)^\T Q(k) x(k) \\ &+x(k)^\T\left\{\bar{K}(k)^\T R(k)\bar{K}(k)+(\bar{A}(k)+B(k)\bar{K}(k))^\T P(k+1)(\bar{A}(k)+B(k)\bar{K}(k))\right\}x(k)\\&+\sum_{i=1}^N x(k)^\T\mathbb{E}\hspace{-.04in}\left\{\hspace{-.04in} \tilde{K}_i(k)^\T R_{ii}(k)\tilde{K}_i(k) \hspace{-.04in}+\hspace{-.04in}(\tilde{A}_i(k)\hspace{-.04in}+ \hspace{-.04in}B_i(k)\tilde{K}_i(k))^\T \hspace{-.04in} P(k\hspace{-.04in}+\hspace{-.04in}1)(\tilde{A}_i(k) \hspace{-.04in}+\hspace{-.04in}B_i(k)\tilde{K}_i(k)) \hspace{-.04in} \right\}\hspace{-.04in}x(k),
\end{split}
\end{equation}
By expanding and reordering the terms, we can simplify this equation as
\begin{equation} \label{eqn:2.2}
\begin{split}
x(k)^\T P(k)x(k)
=&\;x(k)^\T Q(k) x(k)+x(k)^\T\mathbf{R}(\bar{A}(k),P(k+1),B(k),R)x(k) \\ &+ \sum_{i=1}^N x(k)^\T\mathbb{E}\left\{\mathbf{R}(\tilde{A}_i(k),P(k+1),B_i(k), R_{ii})\right\}x(k).
\end{split}
\end{equation}
Now, since the equality in~(\ref{eqn:2.2}) is true irrespective of the value of the state vector $x(k)$, we get the recurrence relation in~(\ref{eqn:P}). This concludes the proof.
\end{proof} 

\begin{remark} Theorem~\ref{tho:1} shows that the optimal controller~(\ref{eqn:FHOC}) is a linear state-feedback controller and that it is composed of two parts. The first part is a function of only the parameter statistics (i.e., $\mathbb{E}\{A(k)\}$ and $\mathbb{E}\{\tilde{A}(k)\otimes \tilde{A}(k)\}$) while the second part is a function of exact local model parameters (i.e., $\{A_{ij}(k) \;|\; 1\leq j\leq N\}$ for controller $i$). Note that the optimal controller does not assume any specific probability distribution for the model parameters. It is worth mentioning whenever $n\gg 1$, for computing the optimal controller, we need to perform arithmetic operations on very large matrices (since $\mathbb{E}\{A(k)\}\in\mathbb{R}^{n\times n}$ and $\mathbb{E}\{\tilde{A}(k)\otimes \tilde{A}(k)\}\in\mathbb{R}^{n^2\times n^2}$) which might be numerically difficult (except for special cases where the statistics of the underlying system follows a specific structure or sparsity pattern). 
\end{remark}

\begin{remark} Note that the optimal controller in Theorem~\ref{tho:1} is not structured in terms of the state measurement availability, i.e., controller $i$ accesses the full state measurement $x(k)$. This situation can be motivated for many applications by the rise of fast communication networks that can guarantee the availability of full state measurements in moderately large systems. However, in many scenarios, the model information is simply not available due the fact that each module is being designed separately for commercial purposes without any specific information about its future setup (except the average behavior of other components). A viable direction for future research is to optimize the cost function over the set of structured control laws.
\end{remark}

\begin{remark} It might seem computationally difficult to calculate $\mathbb{E} \{\tilde{A}_i(k)^\T Z \tilde{A}_i(k) \}$ for each time-step $k$ and any given matrix $Z$. However, as pointed out in~\cite{Koning1982}, it suffices to calculate $\mathbb{E} \{\tilde{A}_i(k) \otimes \tilde{A}_i(k) \}$ once, and then use the identity
\begin{equation*}
\begin{split}
\mathbb{E} \{\tilde{A}_i(k)^\T Z(k) \tilde{A}_i(k) \}=\; &
\textrm{vec}^{-1} \left(\mathbb{E} \left\{ \left(\tilde{A}_i(k) \otimes \tilde{A}_i(k)\right)^\T \textrm{vec} \left(Z(k) \right) \right\}\right)\\=\; &
\textrm{vec}^{-1} \left(\mathbb{E} \left\{\tilde{A}_i(k) \otimes \tilde{A}_i(k) \right\}^\T \textrm{vec} \left(Z(k) \right) \right).
\end{split}
\end{equation*}
\end{remark}

\subsection{Infinite-Horizon Cost Function} \label{subsec:IHCF}
In this subsection, we use Theorem~\ref{tho:1} to minimize the infinite-horizon performance criterion
$$
J_\infty(x_0,\{u(k)\}_{k=0}^{\infty})= \lim_{T\rightarrow\infty}J_T(x_0,\{u(k)\}_{k=0}^{T-1}),
$$
where $Q(k)=Q\in\mathcal{S}_{++}^n$ and $R(k)=R\in\mathcal{S}_{++}^m$ for all $0\leq k\leq T-1$ and $Q(T)=0$. For this case, we make the following standing assumption concerning the system parameters statistics:

\begin{assumption} \label{assmp:1} For all time steps $k$, the stochastic processes generating the model parameters of the system in~(\ref{eqn:sys}) satisfy
\begin{itemize}
\item  $\bar{A}(k)=\bar{A}\in\mathbb{R}^{n\times n}$ and $\mathbb{E}\{A(k)\otimes A(k)\}=\Sigma \in\mathbb{R}^{n^2\times n^2}$;
\item $B(k)=B\in\mathbb{R}^{n\times m}$.
\end{itemize}
\end{assumption}

These assumptions are in place to make sure that we are dealing with stationary parameter processes, as otherwise the infinite-horizon optimal control problem could lack physical meaning. We borrow the following technical definition and assumption from~\cite{Koning1982}. We refer interested readers to~\cite{Koning1982} for numerical methods for checking this condition.

\begin{definition} \label{def:1} System~(\ref{eqn:sys}) is called mean square stabilizable if there exists a matrix $L\in\mathbb{R}^{m\times n}$ such that the closed-loop system with controller $u(k)=Lx(k)$ is mean square stable; i.e.,
$\lim_{k\rightarrow +\infty}\mathbb{E}\{x(k)^\T x(k)\}=0.$
\end{definition}

With this definition in hand, we are ready to present the solution of the infinite-horizon optimal control design problem with limited model information.

\begin{theorem} \label{tho:2} Suppose~(\ref{eqn:sys}) satisfies Assumption~\ref{assmp:1} and is mean square stabilizable. The solution of the infinite-horizon optimal control design problem with limited model information is then given by
\begin{equation} \label{eqn:1:tho:2}
\begin{split}
u(k)=&-(R+B^\T PB)^{-1}B^\T P\bar{A}x(k)\\&-\matrix{c}{(R_{11}+B_1^\T PB_1)^{-1}B_1^\T P \tilde{A}_1(k)\\ \vdots \\ (R_{NN}+B_N^\T PB_N)^{-1}B_N^\T P\tilde{A}_N(k)}x(k),
\end{split}
\end{equation}
where $P$ is the unique positive-definite solution of the modified discrete algebraic Riccati equation
\begin{equation} \label{eqn:qRic}
\begin{split}
P=Q+\mathbf{R}(\bar{A},P,B,R)+\sum_{i=1}^N \mathbb{E} \left\{ \mathbf{R}(\tilde{A}_i(k),P,B_i,R_{ii})\right\}.
\end{split}
\end{equation}
Furthermore, the closed-loop system~(\ref{eqn:sys}) and~(\ref{eqn:1:tho:2}) is mean square stable and $$\inf_{\{u(k)\}_{k=0}^{\infty}} J_\infty(x_0,\{u(k)\}_{k=0}^{\infty})=x_0^\T P x_0.$$
\end{theorem}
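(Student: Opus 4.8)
The plan is to obtain Theorem~\ref{tho:2} as the stationary limit of the finite-horizon result in Theorem~\ref{tho:1}. Under Assumption~\ref{assmp:1} the backward recursion~\eqref{eqn:P} reduces to the iteration of a single time-invariant map $\mathcal{G}:\mathcal{S}_+^n\to\mathcal{S}_+^n$,
\begin{equation*}
\mathcal{G}(P)=Q+\mathbf{R}(\bar{A},P,B,R)+\sum_{i=1}^N \mathbb{E}\{\mathbf{R}(\tilde{A}_i(k),P,B_i,R_{ii})\},
\end{equation*}
where the expectation is independent of $k$ because $\mathbb{E}\{A(k)\otimes A(k)\}=\Sigma$ and $\bar A$ are constant. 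Taking the terminal condition $P(T)=Q(T)=0$, Theorem~\ref{tho:1} then gives the finite-horizon optimal cost as $x_0^\T P(0)x_0=x_0^\T \mathcal{G}^T(0)x_0$, with $\mathcal{G}^T$ the $T$-fold composition. First I would record two properties of $\mathcal{G}$. Using the variational identity $\mathbf{R}(A,P,B,R)=\min_{K}\,[(A+BK)^\T P(A+BK)+K^\T RK]$ (whose minimizer is precisely the gain appearing in~\eqref{eqn:1:tho:2}), each Riccati term is monotone in $P$, so $\mathcal{G}$ is monotone on $\mathcal{S}_+^n$: if $0\leq P\leq P'$ then $\mathcal{G}(P)\leq \mathcal{G}(P')$. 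Moreover $\mathcal{G}(P)\geq Q\geq 0$ since each $\mathbf{R}(\cdot)\geq 0$. As $\mathcal{G}(0)=Q\geq 0$, induction shows $\{\mathcal{G}^T(0)\}_T$ is monotonically nondecreasing in the positive semidefinite order.

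The key analytic step is boundedness, and this is exactly where mean square stabilizability enters. By Definition~\ref{def:1} there is a constant gain $L$ with $u(k)=Lx(k)$ mean square stabilizing; this controller is admissible in the sense of~\eqref{control:form}, and since the second-moment dynamics of the closed loop are governed by a single constant matrix, mean square stability forces $\mathbb{E}\{x(k)^\T x(k)\}$ to decay geometrically. Hence its infinite-horizon cost $J_\infty(x_0,\{Lx(k)\})$ is finite, and because all stage costs are nonnegative and $Q(T)=0$, we have $x_0^\T \mathcal{G}^T(0)x_0=\inf_u J_T(x_0,\{u(k)\})\leq J_T(x_0,\{Lx(k)\})\leq J_\infty(x_0,\{Lx(k)\})$ for every $T$. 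Thus the monotone scalar sequence $x_0^\T \mathcal{G}^T(0)x_0$ is bounded above for each $x_0$; by polarization $\mathcal{G}^T(0)\to P$ entrywise for some $P\in\mathcal{S}_+^n$. Continuity of $\mathcal{G}$ (the inverse $(R+B^\T PB)^{-1}$ is well defined since $R>0$ and $P\geq 0$) then yields the fixed-point equation $P=\mathcal{G}(P)$, which is~\eqref{eqn:qRic}, and $P\geq Q>0$ makes $P$ positive definite.

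Next I would establish optimality together with the asserted closed-loop stability. For the lower bound, any admissible control satisfies $J_\infty(x_0,\{u(k)\})\geq J_T(x_0,\{u(k)\})\geq x_0^\T \mathcal{G}^T(0)x_0$, so letting $T\to\infty$ gives $\inf_u J_\infty(x_0,\{u(k)\})\geq x_0^\T Px_0$. For the upper bound I would use the stationary controller~\eqref{eqn:1:tho:2}. Setting $V(x)=x^\T Px$, the per-step identity from the proof of Theorem~\ref{tho:1}, specialized to $P(k)=P(k+1)=P$, reads $\mathbb{E}\{x(k)^\T Qx(k)+u(k)^\T Ru(k)+V(x(k+1))\mid x(k)\}=V(x(k))$ along this controller. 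Since $Q>0$ and $0<P$, dropping the nonnegative input term yields $\mathbb{E}\{V(x(k+1))\mid x(k)\}\leq (1-\lambda_{\min}(Q)/\lambda_{\max}(P))V(x(k))$, whence $\mathbb{E}\{x(k)^\T x(k)\}\to 0$ geometrically, which is the claimed mean square stability. Telescoping the same identity over $k=0,\dots,T-1$ and taking total expectations gives $x_0^\T Px_0=\mathbb{E}\{\sum_{k=0}^{T-1}(x(k)^\T Qx(k)+u(k)^\T Ru(k))\}+\mathbb{E}\{x(T)^\T Px(T)\}$, and the terminal term vanishes as $T\to\infty$ by mean square stability, so the controller attains cost $x_0^\T Px_0$. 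Combining the two bounds gives $\inf_u J_\infty(x_0,\{u(k)\})=x_0^\T Px_0$.

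Finally, for uniqueness I would note that the monotonicity argument also shows $\mathcal{G}^T(0)\leq P'$ for every positive semidefinite solution $P'$ of~\eqref{eqn:qRic} (since $0\leq P'$ and $P'$ is a fixed point), so $P$ is the minimal solution; together with the observation that any positive-definite solution is stabilizing by the Lyapunov argument above (again using $Q>0$) and therefore furnishes an admissible controller with cost $x_0^\T P'x_0\geq x_0^\T Px_0$, uniqueness of the positive-definite solution follows, as established for the modified Riccati equation in~\cite{Koning1982}. The step I expect to be the main obstacle is the boundedness and convergence of $\{\mathcal{G}^T(0)\}$: the monotonicity, fixed-point, and stability computations are routine once convergence is in hand, but ruling out divergence genuinely requires the mean square stabilizability hypothesis and a careful comparison with the suboptimal constant-gain cost, while the uniqueness claim is cleanest to borrow from~\cite{Koning1982} rather than reprove in full.
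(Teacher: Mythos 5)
Your overall route---viewing \eqref{eqn:qRic} as the fixed point of the time-invariant map $\mathcal{G}$, generating the monotone sequence $\mathcal{G}^T(0)$ via Theorem~\ref{tho:1}, bounding it by the cost of a constant stabilizing gain, and verifying optimality of the stationary controller \eqref{eqn:1:tho:2}---is essentially the paper's proof. The differences are cosmetic or favorable to you: the paper proves monotonicity of the map through a matrix-inversion identity where you use the variational characterization of the Riccati operator; the paper invokes Theorem~5.1 of \cite{Koning1982} for uniform boundedness of the iterates where you re-derive it from geometric decay of the second moment under a constant mean-square-stabilizing gain (correct, since under Assumption~\ref{assmp:1} the second-moment dynamics are governed by a fixed linear operator, so pointwise convergence to zero forces spectral radius less than one); and your Lyapunov/telescoping verification that the controller \eqref{eqn:1:tho:2} is mean square stabilizing and attains cost $x_0^\T P x_0$ is more explicit than what the paper writes down.

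The genuine gap is uniqueness, which is part of the theorem's claim. Both of your observations produce the \emph{same} inequality: minimality of the monotone limit gives $P\leq P'$ for any positive semidefinite fixed point $P'$, and the cost comparison ``the $P'$-controller is admissible and attains $x_0^\T P' x_0\geq \inf_u J_\infty = x_0^\T P x_0$'' again gives $P'\geq P$. Nothing you wrote yields $P'\leq P$, so uniqueness does not follow. Nor can it be borrowed from \cite{Koning1982}: equation \eqref{eqn:qRic}, with its subsystem-wise terms $\mathbf{R}(\tilde{A}_i(k),P,B_i,R_{ii})$, is not the Riccati equation treated there---the paper's proof exists precisely to \emph{extend} Koning's reasoning to this modified equation, so citing it here is circular. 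The missing step, which is how the paper closes the argument, is the reverse comparison. Since $P'$ is a fixed point, Theorem~\ref{tho:1} with terminal weight $P(T)=P'$ shows that $x_0^\T P' x_0$ is the infimum of the finite-horizon cost $\mathbb{E}\{x(T)^\T P' x(T)+\sum_{k=0}^{T-1}(x(k)^\T Q x(k)+u(k)^\T R u(k))\}$, hence is bounded above by that cost evaluated along any admissible trajectory whose terminal second moment vanishes. The paper uses the finite-horizon optimal trajectories (whose terminal state decays because $Q>0$ and the optimal costs converge); in your framework it is even easier to use the closed loop under \eqref{eqn:1:tho:2}, which your Lyapunov argument already shows is mean square stable: this gives $x_0^\T P' x_0 \leq J_T(x_0,\{u^{\scriptsize{\mbox{LMI}}}(k)\}) + \mathbb{E}\{x(T)^\T P' x(T)\} \rightarrow x_0^\T P x_0$, i.e.\ $P'\leq P$, and therefore $P'=P$. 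Without this step your proposal establishes existence, optimality, and mean square stability, but not the uniqueness asserted in the statement.
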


\begin{proof} Note that the proof of this theorem follows the same line of reasoning as in~\cite{Koning1982}. We extend the result of~\cite{Koning1982} to hold for the Riccati-like backward difference equation presented in~(\ref{eqn:P}). First, let us define the mapping $f:\mathcal{S}_+^n\rightarrow\mathcal{S}_+^n$ such that, for any $X\in\mathcal{S}_+^n$,
\begin{equation*}
\begin{split}
f(X)= Q+ \bar{A}^\T &\left(X - X B (R+B^\T X B)^{-1} B^\T X \right) \bar{A} \\ &+\sum_{i=1}^N \mathbb{E} \left\{ \tilde{A}_i^\T \left(X - X B_i (R_{ii}+B_i^\T X B_i)^{-1} B_i^\T X \right) \tilde{A}_i \right\}.
\end{split}
\end{equation*}
Using part~2 of Subsection~3.5.2 in~\cite{Handbook1996}, we have the matrix inversion identity
$$ X-XW(Z+W^\T XW)^{-1}W^\T X=(X^{-1}+WZ^{-1}W^\T)^{-1}, $$
for any matrix $W$ and positive-definite matrices $X$ and $Z$. Therefore, for any $X\in\mathcal{S}_{++}^n$, we have
\begin{equation} \label{eqn:proof:f_definition}
\begin{split}
f(X)= Q+\bar{A}^\T (X^{-1}+B R^{-1} B^\T)^{-1}\bar{A}+\sum_{i=1}^N \mathbb{E} \left\{ \tilde{A}_i^\T (X^{-1}+B_i R_{ii}^{-1} B_i^\T)^{-1} \tilde{A}_i \right\}.
\end{split}
\end{equation}
Note that, if $X\geq Y\geq 0$, then 
$$
(X^{-1}+W Z^{-1} W^\T)^{-1}\geq (Y^{-1}+W Z^{-1} W^\T)^{-1}, 
$$
for any matrix $W$ and positive-definite matrix $Z$. Therefore, if $X\geq Y\geq 0$, we get 
$$
f(X)\geq f(Y)>0. 
$$
For any given $T\geq 0$, we define the sequence of matrices $\{X_i\}_{i=0}^T$ such that $X_0=0$ and $X_{i+1}=f(X_i)$. We have
$$
X_{1}=f(X_0)=f(0)=Q> 0=X_0. 
$$
Similarly, 
\begin{equation} \label{eqn:proof1}
X_{2}=f(X_{1})\geq f(X_{0})=X_{1} > 0. 
\end{equation}
The left-most inequality in~(\ref{eqn:proof1}) is true because $X_{1}\geq X_{0}$. We can repeat the same argument, and show that for all $1\leq i\leq T-1$, $X_{i+1}\geq X_{i}>0$. Using Theorem~\ref{tho:1}, we know that 
$$
x_0^\T X_T x_0= \inf_{\{u(k)\}_{k=0}^{T-1}} J_T(x_0,\{u(k)\}_{k=0}^{T-1}). 
$$
According to Theorem~5.1 in~\cite{Koning1982} (using the assumption that the underlying system is mean square stabilizable), the sequence $\{X_i\}_{i=0}^\infty$ is uniformly upper-bounded; i.e., there exists $W\in\mathbb{R}^{n\times n}$ such that $X_i\leq W$ for all $i\geq 0$. Therefore, we get
\begin{equation} \label{eqn:proof2}
\lim_{T\rightarrow +\infty} X_T=X\in\mathbb{R}^{n\times n}
\end{equation}
since $\{X_i\}_{i=0}^\infty$ is an increasing and bounded sequence. In addition, we have $X\in\mathcal{S}_{++}^n$ since $X_i\in\mathcal{S}_{++}^n$ for all $i\geq2$ and $\{X_i\}_{i=0}^\infty$ is an increasing sequence. Now, we need to prove that the limit $X$ in~(\ref{eqn:proof2}) is the unique positive definite solution of the modified discrete algebraic Riccati equation~(\ref{eqn:qRic}). This is done by a contrapositive argument. Assume that there exists $Z\in\mathcal{S}_{+}^{n}$ such that $f(Z)=Z$. For this matrix $Z$, we have 
$$
Z=f(Z)\geq f(0)=X_1 
$$
since $Z\geq 0$. Similarly, noting that $Z\geq X_1$, we get 
$$
Z=f(Z)\geq f(X_1)=X_2. 
$$
Repeating the same argument, we get $Z\geq X_i$ for all $i\geq 0$. Therefore, for each $T> 0$, we have the inequality
\begin{equation} \label{eqn:proof3}
\begin{split}
\inf_{\{u(k)\}_{k=0}^{T-1}} J_T(x_0,\{u(k)\}_{k=0}^{T-1})&=x_0^\T X_T x_0 \\ &\leq x_0^\T Z x_0\\ &=\hspace{-.1in} \inf_{\{u(k)\}_{k=0}^{T-1}} \hspace{-.05in} \mathbb{E}\left\{x(T)^\T Z x(T)\hspace{-.03in}+\hspace{-.03in}\sum_{k=0}^{T-1} x(k)^\T Q x(k)\hspace{-.03in} +\hspace{-.03in}u(k)^\T R u(k) \right\}\hspace{-.05in}.
\end{split}
\end{equation}
Note that the last equality in~(\ref{eqn:proof3}) is a direct consequence of Theorem~\ref{tho:1} and the fact that $Z=f^q(Z)$ for any positive $q\in\mathbb{Z}$. Let us define
$\{u^*(k)\}_{k=0}^{T-1}=\arginf_{\{u(k)\}_{k=0}^{T-1}} J_T(x_0,\{u(k)\}_{k=0}^{T-1}),$
and $x^*(k)$ as the state of the system when the control sequence $u^*(k)$ is applied. Now, we get the inequality
\begin{equation} \label{eqn:proof5}
\begin{split}
\inf_{\{u(k)\}_{k=0}^{T-1}} &\mathbb{E}\left\{x(T)^\T Z x(T)+\sum_{k=0}^{T-1} x(k)^\T Q x(k)+u(k)^\T R u(k) \right\}  \\ &\hspace{.7in}\leq \mathbb{E}\left\{x^*(T)^\T Z x^*(T)+\sum_{k=0}^{T-1} x^*(k)^\T Q x^*(k)+u^*(k)^\T R u^*(k) \right\},
\end{split}
\end{equation}
since, by definition, $\{u^*(k)\}_{k=0}^{T-1}$ is not the minimizer of this cost function.
It is easy to see that the right-hand side of~(\ref{eqn:proof5}) is equal to $J_T(x_0,\{u^*(k)\}_{k=0}^{T-1}) + \mathbb{E}\left\{x^*(T)^\T Z x^*(T) \right\}$. Thus, using~(\ref{eqn:proof3}) and~(\ref{eqn:proof5}), we get
\begin{equation} \label{eqn:3}
\begin{split}
x_0^\T X_T x_0 &\leq x_0^\T Z x_0 \\&\leq J_T(x_0,\{u^*(k)\}_{k=0}^{T-1}) + \mathbb{E}\left\{x^*(T)^\T Z x^*(T) \right\}\\&=x_0^\T X_T x_0+\mathbb{E}\left\{x^*(T)^\T Z x^*(T) \right\}.
\end{split}
\end{equation}
Finally, thanks to the facts that $Q>0$ and
\begin{equation*}
\begin{split}
&\lim_{T\rightarrow +\infty}\mathbb{E}\left\{\sum_{k=0}^{T-1} x^*(k)^\T Q x^*(k)+u^*(k)^\T R u^*(k) \right\}=\lim_{T\rightarrow +\infty}x_0^\T X_T x_0 =x_0^\T X x_0 <\infty,
\end{split}
\end{equation*}
we get that
$\lim_{T\rightarrow \infty}\mathbb{E}\left\{x^*(T)^\T x^*(T) \right\}=0.$
Therefore, we have
$\lim_{T\rightarrow \infty}\mathbb{E}\left\{x^*(T)^\T Z x^*(T) \right\}=0.$
Letting $T$ go to infinity in~(\ref{eqn:3}), results in $x_0^\T X x_0=x_0^\T Z x_0$ for all $x_0\in\mathbb{R}^n$. Thus, $X=Z$. This concludes the proof.
\end{proof}

\begin{remark} Note that we can use the procedure introduced in the proof of Theorem~\ref{tho:2} to numerically compute the unique positive-definite solution of the modified discrete algebraic Riccati equation in~(\ref{eqn:qRic}); i.e., we can construct a sequence of matrices $\{X_i\}_{i=0}^\infty$, such that $X_{i+1}=f(X_i)$ with $X_0=0$ where $f(\cdot)$ is defined as in~(\ref{eqn:proof:f_definition}). Because of~(\ref{eqn:proof2}), it is evident that, for each $\delta>0$, there exists a positive integer $q(\delta)$ such that $X_{q(\delta)}$ is in the $\delta$-neighborhood of the unique positive-definite solution of the modified discrete algebraic Riccati equation~(\ref{eqn:qRic}). Hence, the procedure generates a solution with any desired precision. \end{remark}

Note that Definition~\ref{def:1} requires the existence of a fixed feedback gain $L$ that ensures the closed-loop mean square stability. This might result in conservative results. In what follows, we relax this assumption to time-varying matrices.

\begin{definition} \label{def:2} System~(\ref{eqn:sys}) is called mean square stabilizable under limited model information if there exist mappings $\Gamma_i:\mathbb{R}^{n_i\times n_1}\times \cdots \times \mathbb{R}^{n_i\times n_N} \rightarrow \mathbb{R}^{m_i\times n}$, $1\leq i\leq N$, such that the closed-loop system with controller
\begin{equation*}
u(k)=\matrix{c}{\Gamma_1(A_{11}(k),\dots,A_{1N}(k))  \\ \vdots  \\ \Gamma_N(A_{N1}(k),\dots,A_{NN}(k))}x(k),
\end{equation*}
is mean square stable.
\end{definition}

Clearly, if a discrete-time linear system with stochastically-varying parameters is mean square stabilizable, it is also mean square stabilizable under limited model information.

\begin{remark} \label{remark:deadbeat:definition} All fully-actuated systems (i.e., systems where $m_i=n_i$ for all $1\leq i\leq N$) are mean square stabilizable under limited model information because, for each $1\leq i\leq N$, the deadbeat controller $\Gamma_i(A_{i1}(k),\dots,A_{iN}(k))=-B_{ii}^{-1}[A_{i1}(k) \; \cdots \; A_{iN}(k)],$ is based on limited model information and mean square stabilizes the system.
\end{remark}

As a price of relaxing this assumption to time-varying matrices, we need to strengthen Assumption~\ref{assmp:1}.

\begin{assumption} \label{assmp:1a} The stochastic processes generating the model parameters of system~(\ref{eqn:sys}) satisfy that
\begin{itemize}
\item The probability distribution of the matrices $\{A(k)\}_{k=0}^\infty$ is constant in time;
\item $B(k)=B\in\mathbb{R}^{n\times m}$ for all $k\geq 0$.
\end{itemize}
\end{assumption}

Note that in Assumption~\ref{assmp:1} we only needed the first and the second moments of the system parameters to be constant. However, in Assumption~\ref{assmp:1a} all the moments are constant.

\begin{corollary} \label{cor:1} Suppose~(\ref{eqn:sys}) satisfies Assumption~\ref{assmp:1a} and is mean square stabilizable under limited model information. The solution of the infinite-horizon optimal control design problem with limited model information is then given by~(\ref{eqn:1:tho:2}) where $P$ is the unique finite positive-definite solution of the modified discrete algebraic Riccati equation in~(\ref{eqn:qRic}). Furthermore, the closed-loop system~(\ref{eqn:sys}) and~(\ref{eqn:1:tho:2}) is mean square stable and $\inf_{\{u(k)\}_{k=0}^{\infty}} J_\infty(x_0,\{u(k)\}_{k=0}^{\infty})=x_0^\T P x_0.$
\end{corollary}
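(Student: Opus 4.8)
The plan is to follow the proof of Theorem~\ref{tho:2} almost verbatim, since Assumption~\ref{assmp:1a} keeps the first and second moments of $A(k)$ constant and hence the map $f$ in~\eqref{eqn:proof:f_definition} is still well defined and time-invariant. Thus the monotone iteration $X_0=0$, $X_{i+1}=f(X_i)$ is again increasing with $X_i\in\mathcal{S}_{++}^n$ for $i\ge 1$ (using $Q>0$), and by Theorem~\ref{tho:1} it satisfies $x_0^\T X_T x_0=\inf_{\{u(k)\}_{k=0}^{T-1}}J_T(x_0,\{u(k)\}_{k=0}^{T-1})$ for the infinite-horizon weights with zero terminal cost. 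The only place the proof of Theorem~\ref{tho:2} uses Definition~\ref{def:1} (through Theorem~5.1 of~\cite{Koning1982}) is to establish that $\{X_i\}_{i=0}^\infty$ is uniformly upper bounded, which yields the limit~\eqref{eqn:proof2}. I would replace exactly this step, deriving the uniform bound from mean square stabilizability \emph{under limited model information} instead.

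For the replacement, let $\Gamma_1,\dots,\Gamma_N$ be the mappings of Definition~\ref{def:2} whose associated controller mean square stabilizes the system, and write $\{u^\Gamma(k)\}$ for that controller. Since $u^\Gamma_i(k)=\Gamma_i(A_{i1}(k),\dots,A_{iN}(k))x(k)$, this controller is of the admissible form~\eqref{control:form} and respects the constraint of Definition~\ref{assmp:3}. Under Assumption~\ref{assmp:1a} the closed loop driven by $\{u^\Gamma(k)\}$ is a stationary, mean square stable process, so $\mathbb{E}\{x(k)^\T x(k)\}$ decays geometrically and the infinite-horizon cost $J_\infty(x_0,\{u^\Gamma(k)\}_{k=0}^{\infty})$ is finite; being quadratic in the arbitrary initial condition, it equals $x_0^\T W x_0$ for some $W\in\mathcal{S}_+^n$. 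Because $\{u^\Gamma(k)\}$ is admissible, the stage costs are nonnegative, and the terminal weight is zero, for every $T$ I obtain $x_0^\T X_T x_0\le J_T(x_0,\{u^\Gamma(k)\}_{k=0}^{T-1})\le J_\infty(x_0,\{u^\Gamma(k)\}_{k=0}^{\infty})=x_0^\T W x_0$ for all $x_0$, hence $X_T\le W$ for all $T$. This is precisely the uniform bound that the proof of Theorem~\ref{tho:2} needed.

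From here the rest of the proof of Theorem~\ref{tho:2} carries over unchanged: the increasing, bounded sequence $\{X_i\}$ converges to some $X\in\mathcal{S}_{++}^n$ as in~\eqref{eqn:proof2}; continuity of $f$ on $\mathcal{S}_{++}^n$ gives $X=f(X)$, so $X$ solves the modified discrete algebraic Riccati equation~\eqref{eqn:qRic}; and the contrapositive argument culminating in~\eqref{eqn:3}---which uses only $Q>0$, the finiteness $\lim_{T\to\infty}x_0^\T X_T x_0=x_0^\T X x_0<\infty$ just established, and the resulting $\lim_{T\to\infty}\mathbb{E}\{x^*(T)^\T x^*(T)\}=0$---forces $X=Z$ for any fixed point $Z$, giving uniqueness. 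Mean square stability of the optimal closed loop~\eqref{eqn:1:tho:2} follows because $\sum_k\mathbb{E}\{x^*(k)^\T Q x^*(k)\}\le x_0^\T X x_0<\infty$ with $Q>0$ forces $\mathbb{E}\{x^*(k)^\T x^*(k)\}\to 0$, and the optimal value is $\lim_{T\to\infty}x_0^\T X_T x_0=x_0^\T P x_0$ with $P=X$.

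The main obstacle is the finiteness claim in the second paragraph: mean square stabilizability under limited model information is a statement only about $\lim_k\mathbb{E}\{x(k)^\T x(k)\}$ under a realization-dependent gain, and I must upgrade it to a finite infinite-horizon quadratic cost. This is where the strengthening to Assumption~\ref{assmp:1a} is essential. Unlike the fixed gain $L$ of Definition~\ref{def:1}, the gain $\diag(\Gamma_1(A(k)),\dots,\Gamma_N(A(k)))$ depends nonlinearly on the realization, so both the geometric decay rate of $\mathbb{E}\{x(k)^\T x(k)\}$ and the second moment of the gain depend on the full law of $A(k)$, not merely its first two moments; requiring the distribution to be time-invariant makes the closed loop stationary and yields a single summable cost rate. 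The one technical point requiring care is verifying that mean square stability indeed forces the control-effort term to be summable, i.e.\ that $\mathbb{E}\{\diag(\Gamma_i)^\T R\,\diag(\Gamma_i)\}$ is finite---which is implicit in the finiteness of the closed-loop second-moment recursion underlying mean square stability.
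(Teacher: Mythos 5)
Your proposal is correct and takes essentially the same route as the paper: its proof of Corollary~\ref{cor:1} likewise reuses the proof of Theorem~\ref{tho:2} verbatim and replaces only the uniform-bound step, bounding $x_0^\T X_T x_0$ by the finite-horizon cost of the stabilizing limited-model-information controller $\bar{u}(k)=\Gamma(k)x(k)$ supplied by Definition~\ref{def:2}. The one step you assert rather than prove---that stationarity plus mean square stability yields geometric decay and a finite quadratic bound $x_0^\T W x_0$, including finiteness and summability of the control-effort term via $\bar{R}=\mathbb{E}\{\Gamma(k)^\T R\Gamma(k)\}$---is exactly what the paper's Claim~1 carries out in detail, by writing the closed-loop second-moment recursion as $W_k=\vect^{-1}(\bar{U}^k\vect(Q+\bar{R}))$ with $\bar{U}=\mathbb{E}\{(A(k)+B\Gamma(k))^\T\otimes(A(k)+B\Gamma(k))^\T\}$, using mean square stability to conclude $\bar{U}^k\rightarrow 0$, hence spectral radius of $\bar{U}$ strictly less than one, and then summing the geometric series to get $W=\vect^{-1}\left((I-\bar{U})^{-1}\vect(Q+\bar{R})\right)$.
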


\begin{proof} The only place in the proof of Theorem~\ref{tho:2} where we used the assumption that the underlying system is mean square stabilizable, was to show that the sequence $\{X_i\}_{i=0}^\infty$ is upper bounded; i.e., there exists $W\in\mathcal{S}_{+}^{n}$ such that $X_i\leq W$ for all $i\geq 0$. We just need to prove this fact considering the assumption that the system is mean square stabilizable under limited model information. Note that for any $T> 0$, we have
\begin{equation} \label{eqn:0:proof:cor:1}
\begin{split}
\inf_{\{u(k)\}_{k=0}^{T-1}} J_T(x_0,\{u(k)\}_{k=0}^{T-1})&=x_0^\T X_T x_0 \leq \mathbb{E}\left\{\sum_{k=0}^{T-1} x(k)^\T Q x(k)+\bar{u}(k)^\T R \bar{u}(k) \right\},
\end{split}
\end{equation}
where $x(k)$ is the system state when it is initialized at $x(0)=x_0$ and the control law $\bar{u}(k)=\Gamma(k)x(k)$ is in effect with
$$
\Gamma(k)=\matrix{c}{\Gamma_1(A_{11}(k),\dots,A_{1N}(k)) \\ \vdots \\ \Gamma_N(A_{N1}(k),\dots,A_{NN}(k))}
$$
that satisfies the condition of Definition~\ref{def:2}. Note that, at each time step $k$, $\Gamma(k)$ is independent of $x(k)$ because of Assumption~\ref{assmp:iid}. Therefore, we have
\begin{equation*}
\begin{split}
\mathbb{E}\left\{\sum_{k=0}^{T-1} x(k)^\T Q x(k)+\bar{u}(k)^\T R \bar{u}(k) \right\}&=
\mathbb{E}\left\{\sum_{k=0}^{T-1} x(k)^\T (Q+\Gamma(k)^\top R\Gamma(k)) x(k) \right\}\\&=
\mathbb{E}\left\{\sum_{k=0}^{T-1} x(k)^\T (Q+\mathbb{E}\{\Gamma(k)^\top R\Gamma(k)\}) x(k) \right\}.
\end{split}
\end{equation*}
Furthermore, we can see that $\mathbb{E}\{\Gamma(k)^\top R\Gamma(k)\}=\bar{R}\in\mathcal{S}_{+}^n$ due to Assumption~\ref{assmp:1a}. Now, let us define the sequence $\{W_i\}_{i=0}^\infty$ such that $W_0=Q+\bar{R}$ and $W_{i+1}=\mathbb{E}\{(A(i)+B\Gamma(i))^\top W_i(A(i)+B\Gamma(i))\}$ which results in 
\begin{equation*}
\begin{split}
\mathbb{E}\left\{\sum_{k=0}^{T-1} x(k)^\T Q x(k)+\bar{u}(k)^\T R \bar{u}(k) \right\}&=
\mathbb{E}\left\{\sum_{k=0}^{T-1} x_0^\T W_k x_0 \right\}=x_0^\T\mathbb{E}\left\{\sum_{k=0}^{T-1}  W_k  \right\}x_0.
\end{split}
\end{equation*}
Notice that by construction, $W_i\geq 0$ for all $i$. In what follows, we prove that $\lim_{T\rightarrow \infty}\sum_{k=0}^{T-1} W_k=W<\infty$. Notice that using Assumption~\ref{assmp:1a}, we have $\mathbb{E}\{(A(i)+B\Gamma(i))^\top\otimes(A(i)+B\Gamma(i))^\top\}=\bar{U}$ for a fixed matrix $\bar{U}\in\mathbb{R}^{n^2\times n^2}$. 

\textsc{Claim 1}: $\max_j|\lambda_j(\bar{U})|<1$ where $\lambda_j(\cdot)$ denotes the eigenvalues of a matrix.

To prove this claim, construct a sequence $\{\bar{W}_i\}_{i=0}^\infty$ such that $\bar{W}_{i+1}=\mathbb{E}\{(A(i)+B\Gamma(i))^\top \bar{W}_i\linebreak(A(i)+B\Gamma(i))\}$ and $\bar{W}_0$ can be an arbitrary matrix (note that the difference between $\{W_i\}_{i=0}^\infty$ and $\{\bar{W}_i\}_{i=0}^\infty$ is the initial condition). Now, using an inductive argument, we prove that $\bar{W}_k=\vect^{-1}(\bar{U}^k\vect(\bar{W}_0))$. Firstly, 
\begin{align*}
\bar{W}_1
&=\mathbb{E}\left\{(A(1)+B\Gamma(1))^\top \bar{W}_0 (A(1)+B\Gamma(1))\right\}\\
&=\mathbb{E}\bigg\{\vect^{-1}\bigg((A(1)+B\Gamma(1))^\top\otimes(A(1)+B\Gamma(1))^\top \vect(\bar{W}_0)\bigg)\bigg\}\nonumber\\
&=\vect^{-1}\bigg(\mathbb{E}\bigg\{(A(1)+B\Gamma(1))^\top\otimes(A(1)+B\Gamma(1))^\top\bigg\} \vect(\bar{W}_0)\bigg)\bigg\}\nonumber\\
&=\vect^{-1}(\bar{U} \vect(\bar{W}_0)).
\end{align*}
where the second equality follows from the fact that for any three compatible matrices $A,B,C$, we have $ABC=\vect^{-1}((C^\T \otimes A) \vect(B))$ and the third equality holds because $\vect^{-1}$ is a linear operator.
Now, let us show that $\bar{W}_{k+1}=\vect^{-1}(\bar{U}^{k+1} \vect(\bar{W}_0))$ if $\bar{W}_k=\vect^{-1}(\bar{U}^k\vect(\bar{W}_0))$. To do so, notice that
\begin{align*}
\bar{W}_{k+1}
&=\mathbb{E}\left\{(A(k+1)+B\Gamma(k+1))^\top \bar{W}_k (A(k+1)+B\Gamma(k+1))\right\}\\
&=\mathbb{E}\bigg\{\vect^{-1}\bigg((A(k+1)+B\Gamma(k+1))^\top\otimes(A(k+1)+B\Gamma(k+1))^\top \vect(\bar{W}_k)\bigg)\bigg\}\nonumber\\
&=\vect^{-1}\bigg(\mathbb{E}\bigg\{(A(k+1)+B\Gamma(k+1))^\top\otimes(A(k+1)+B\Gamma(k+1))^\top\bigg\} \bar{U}^k\vect(\bar{W}_0)\bigg)\bigg\}\nonumber\\
&=\vect^{-1}(\bar{U}^{k+1} \vect(\bar{W}_0)).
\end{align*}
This conclude the induction. Now, notice that $\lim_{k\rightarrow \infty} x_0^\T \bar{W}_k x_0=\lim_{k\rightarrow \infty}\mathbb{E}\{x(k)^\T \bar{W}_0 x(k)\}=0$ for any $x_0\in\mathbb{R}^n$ because $\Gamma(k)$ satisfies the condition of Definition~\ref{def:2}. As a result, $\lim_{k\rightarrow \infty} \bar{W}_k =0$. Therefore, we get $\lim_{k\rightarrow \infty} \bar{U}^k\vect(\bar{W}_0)=0$ irrespective of the choice of $\bar{W}_0$ which, in turn, implies that $\lim_{k\rightarrow \infty} \bar{U}^k=0$. Using Theorem~4~\cite[p.\,14]{isaacson1994analysis}, we get $\max_j|\lambda_j(\bar{U})|<1$.

Now that we have proved Claim~1, we are ready to show that $\lim_{T\rightarrow \infty}\sum_{k=0}^{T-1} W_k=W<\infty$. Recalling the proof of Claim~1 while setting $\bar{W}_0=W_0=Q+\bar{R}$, we get that $W_k=\vect^{-1}(\bar{U}^k\vect(Q+\bar{R}))$ and as a result
\begin{align*}
\lim_{T\rightarrow \infty}\sum_{k=0}^{T-1} W_k
&=\lim_{T\rightarrow \infty}\sum_{k=0}^{T-1} \vect^{-1}(\bar{U}^k\vect(Q+\bar{R}))
=\vect^{-1}\left(\left[\lim_{T\rightarrow \infty}\sum_{k=0}^{T-1} \bar{U}^k\right]\vect(Q+\bar{R})\right).
\end{align*}
Now, notice that using Claim~1, $\lim_{T\rightarrow \infty}\sum_{k=0}^{T-1} \bar{U}^k=(I-\bar{U})^{-1}$. Let $\bar{U}^\infty=(I-\bar{U})^{-1}\in\mathbb{R}^{n^2\times n^2}$. Hence, we get $\lim_{T\rightarrow \infty}\sum_{k=0}^{T-1} W_k=\vect^{-1}(\bar{U}^\infty\vect(Q+\bar{R}))<\infty$. Let us define $W=\vect^{-1}(\bar{U}^\infty\linebreak\vect(Q+\bar{R}))$. Using~(\ref{eqn:0:proof:cor:1}), we get
\begin{align*}
x_0^\T X_T x_0 \leq \mathbb{E}\left\{\sum_{k=0}^{T-1} x(k)^\T Q x(k)+\bar{u}(k)^\T R \bar{u}(k) \right\}
=\sum_{k=0}^{T-1} x_0^\T W_k x_0
\leq  \sum_{k=0}^{\infty} x_0^\T W_k x_0 \leq x_0^\top W x_0.
\end{align*}
This inequality is indeed true irrespective of the initial condition $x_0$ and the time horizon $T$. Therefore, $X_i\leq W$ for all $i\geq 0$. The rest of the proof is similar to that of Theorem~\ref{tho:2}.
\end{proof}

\setcounter{example}{0}
\begin{example}[Cont'd] \label{example:2} Let us introduce the quadratic cost function
$$
J_\infty(x_0,\{u(k)\}_{k=0}^{\infty})=\mathbb{E}\left\{\sum_{k=0}^\infty x(k)^\top x(k)+u(k)^\top u(k) \right\}.
$$
Following Theorem~\ref{tho:2}, we can easily calculate the optimal controller with limited model information as
\begin{equation*}
\begin{split}
u^{\scriptsize{\mbox{LMI}}}(k)=\matrix{cccc}{
   42.7701+8.0694\alpha_1(k)  & -1.6741 & -29.1868 &   0.1041 \\
  -23.2274  &  0.1757 &  34.4246+6.8698\alpha_2(k) &  -1.7331}x(k).
\end{split}
\end{equation*}
Clearly, the control gain $L_i\in\mathbb{R}^{1\times 4}$ of controller $u_i(k)=L_i(k)x(k)$, $i=1,2$, is a function of only its corresponding subsystem's model parameter $\alpha_i(k)$. \hfill\newqed
\end{example}

An interesting question is what is the value of model information when designing an optimal controller; i.e., having only access to local model information how much does the closed-loop performance degrade in comparison to having access to global model information. To answer this question for the setting considered in this paper, we need to introduce the optimal control design with full model information.

\section{Control Design with Full Model Information} \label{sec:fullmodelinfo}
In this section, we consider the case where we have access to the full model information when designing each subcontroller. Hence, we make the following definition:
\begin{definition} \label{assmp:3:complete} The design of controller~$i$, $1\leq i\leq N$, has full model information if (\emph{a})~the entire model parameters $\{A_{ij}(k) \;|\; 1\leq i,j\leq N,\forall k\}$ are available together with (\emph{b})~the first- and the second-order moments of the system parameters (i.e., $\mathbb{E}\{A(k)\}$ and $\mathbb{E}\{\tilde{A}(k)\otimes \tilde{A}(k)\}$ for all $k$).
\end{definition}

We have the following result for the finite-horizon case.

\begin{theorem} \label{tho:3} The solution of the finite-horizon optimal control design problem with full model information is given by
\begin{equation}
\begin{split}
u(k)=-(R+B(k)^\T &P(k+1)B(k))^{-1}B(k)^\T P(k+1)A(k)x(k),
\end{split}
\end{equation}
where $\{P(k)\}_{k=0}^T$ can be found using the backward difference equation
\begin{equation} \label{eqn:recursive:tho:3}
\begin{split}
P(k)=Q(k)+\mathbf{R}(\bar{A}(k),P(k+1),B(k),R)+\sum_{i=1}^N \mathbb{E} \left\{\mathbf{R}(\tilde{A}_i(k),P(k+1),B(k),R) \right\},
\end{split}
\end{equation}
with the boundary condition $P(T)=Q(T)$. Furthermore, $\inf_{\{u(k)\}_{k=0}^{T-1}} J_T(x_0,\{u(k)\}_{k=0}^{T-1})=x_0^\T P(0) x_0$.
\end{theorem}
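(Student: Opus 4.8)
The plan is to solve the problem by dynamic programming, exactly as in the proof of Theorem~\ref{tho:1}, but exploiting the fact that full model information removes the structural constraint~(\ref{control:form}) on the admissible controls. I would set up the Bellman recursion~(\ref{eqn:rec}) with terminal cost $V_T(x(T))=x(T)^\T Q(T)x(T)$ and prove by backward induction that $V_k(x(k))=x(k)^\T P(k)x(k)$ with $P(k)$ given by~(\ref{eqn:recursive:tho:3}). The induction base is the boundary condition $P(T)=Q(T)$; the inductive step requires minimizing
\begin{equation*}
\mathbb{E}\big\{x(k)^\T Q(k)x(k)+u(k)^\T R u(k)+(A(k)x(k)+B(k)u(k))^\T P(k+1)(A(k)x(k)+B(k)u(k))\,\big|\,x(k)\big\}
\end{equation*}
over all admissible $u(k)$.

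The essential simplification relative to Theorem~\ref{tho:1} is that, under Definition~\ref{assmp:3:complete}, each designer knows the entire realization $A(k)$, so $u(k)$ may be any measurable function of $A(k)$ and the state history, with no coupling across subsystems. Hence the conditional expectation can be minimized pointwise in the realization of $A(k)$. For each fixed $A(k)$ the bracketed expression is a strictly convex quadratic in $u(k)$ (since $R+B(k)^\T P(k+1)B(k)>0$), so completing the square gives the unconstrained minimizer $u(k)=-(R+B(k)^\T P(k+1)B(k))^{-1}B(k)^\T P(k+1)A(k)x(k)$ and minimum value $x(k)^\T \mathbf{R}(A(k),P(k+1),B(k),R)x(k)$. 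Because $A(k)$ is independent of $x(k)$ by Assumption~\ref{assmp:iid}, substituting back yields $x(k)^\T P(k) x(k)=x(k)^\T Q(k)x(k)+x(k)^\T\mathbb{E}\{\mathbf{R}(A(k),P(k+1),B(k),R)\}x(k)$.

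The remaining step is the algebraic identity
\begin{equation*}
\mathbb{E}\{\mathbf{R}(A(k),P(k+1),B(k),R)\}=\mathbf{R}(\bar{A}(k),P(k+1),B(k),R)+\sum_{i=1}^N\mathbb{E}\{\mathbf{R}(\tilde{A}_i(k),P(k+1),B(k),R)\},
\end{equation*}
which I would establish by writing $M=P(k+1)-P(k+1)B(k)(R+B(k)^\T P(k+1)B(k))^{-1}B(k)^\T P(k+1)$, so that $\mathbf{R}(A,P(k+1),B(k),R)=A^\T M A$ with $M$ independent of $A$. Decomposing $A(k)=\bar{A}(k)+\sum_{i=1}^N\tilde{A}_i(k)$ and expanding $A(k)^\T M A(k)$, the cross terms involving $\bar A(k)$ vanish in expectation because $\mathbb{E}\{\tilde A_i(k)\}=0$, and the cross terms $\mathbb{E}\{\tilde A_i(k)^\T M\tilde A_j(k)\}$ with $i\neq j$ vanish because the subsystems are statistically independent (Assumption~\ref{assum:subsystemindependence}) and zero mean; this leaves exactly the claimed sum. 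Substituting gives~(\ref{eqn:recursive:tho:3}), and evaluating the recursion at $k=0$ gives $\inf_{\{u(k)\}} J_T=x_0^\T P(0)x_0$.

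The main obstacle, though much milder here than in Theorem~\ref{tho:1}, is the bookkeeping in this last identity: one must check that the single middle matrix $M$ is shared by the mean term and by every fluctuation term (this is precisely what forces the full $B(k)$ and $R$, rather than the local $B_i(k)$ and $R_{ii}$, to appear in~(\ref{eqn:recursive:tho:3})), and that independence together with the zero-mean property annihilates every cross term in the expansion of $A(k)^\T M A(k)$.
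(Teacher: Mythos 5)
Your proposal is correct and takes essentially the same route as the paper's own proof: dynamic programming with a quadratic value function established by backward induction, where full model information lets the completion of the square be carried out pointwise in the realization of $A(k)$ (equivalently, the paper's substitution $\tilde{u}(k)=u(k)-K(k)x(k)$ with the realization-dependent gain $K(k)=-(R+B(k)^\T P(k+1)B(k))^{-1}B(k)^\T P(k+1)A(k)$), so that the cross terms vanish and the residual quadratic in $\tilde{u}(k)$ is nonnegative. The only difference is one of emphasis: the paper states that the recursion~(\ref{eqn:recursive:tho:3}) is ``readily extracted'' after substituting the optimal control, whereas you spell out the required identity $\mathbb{E}\{A(k)^\T M A(k)\}=\bar{A}(k)^\T M\bar{A}(k)+\sum_{i=1}^N\mathbb{E}\{\tilde{A}_i(k)^\T M\tilde{A}_i(k)\}$ with the common middle matrix $M=P(k+1)-P(k+1)B(k)(R+B(k)^\T P(k+1)B(k))^{-1}B(k)^\T P(k+1)$, justified by the zero-mean property and Assumptions~\ref{assmp:iid} and~\ref{assum:subsystemindependence} --- a correct and harmless elaboration of the step the paper leaves implicit.
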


\begin{proof} 
\ifdefined\SHORTVERSION
The proof is similar to the proof of Theorem~\ref{tho:1} and is therefore omitted. See~\cite{FarokhiCompleteManuscript2013} for the detailed proof.
\fi
\ifdefined\LONGVERSION
The proof is similar to the proof of Theorem~\ref{tho:1}. We solve the finite-horizon optimal control problem using dynamic programming
\begin{equation}  \label{eqn:dynamicprogramming:tho:3}
\begin{split}
V_k(x(k))=\inf_{u(k)} &\mathbb{E}\bigg\{x(k)^\T Q(k)x(k)\hspace{-.04in}+\hspace{-.04in}u(k)^\T R(k)u(k) \hspace{-.04in}+\hspace{-.04in} V_{k+1}(A(k)x(k)+B(k)u(k))\big|x(k) \bigg\},
\end{split}
\end{equation}
where $V_T(x(T))=x(T)^\T Q(T) x(T)$. Note that because of Definition~\ref{assmp:3}, in each step of the dynamic programming, the infimum is taken over the set of all control signals $u(k)$ of the form
\begin{equation*}
\matrix{c}{u_1(k) \\ \vdots \\ u_N(k)}=\matrix{c}{\psi_1(A(k);x(0),\dots,x(k)) \\ \vdots \\ \psi_N(A(k);x(0),\dots,x(k))},
\end{equation*}
where $\psi_i:\mathbb{R}^{n\times n}\times \mathbb{R}^n \times \cdots \times \mathbb{R}^n\rightarrow \mathbb{R}^{m_i}$, $1\leq i\leq N$, can be any mapping. Notice the difference between this proof and that of Theorem~\ref{tho:1} is the fact that $\psi_i$ are a function of the entire matrix $A(k)$. Similarly, let us assume, for all $k$, that $V_{k}(x(k))=x(k)^\T P(k) x(k)$ where $P(k)\in\mathcal{S}_+^n$. This is without loss of generality since $V_T(x(T))= x(T)^\T Q(T) x(T)$ is a quadratic function of the state vector $x(T)$ and using dynamic programming, $V_k(x(k))$ remains a quadratic function of $x(k)$ if $V_{k+1}(x(k+1))$ is a quadratic function of $x(k+1)$ and $u(k)$ is a linear function of $x(k)$. Let us define $K(k)=-(R+B(k)^\T P(k+1)B(k))^{-1}B(k)^\T P(k+1)A(k)$ and $\tilde{u}(k)=u(k)-K(k)x(k)$. This can be easily proved using mathematical induction. Now, we have
\begin{align*}
\mathbb{E} \bigg\{ x(k)^\T Q(k)&x(k)+u(k)^\T R(k)u(k) + (A(k)x(k)+B(k)u(k))^\T P(k+1)(A(k)x(k)+B(k)u(k)) \big| x(k)\bigg\}
\\
=\mathbb{E} \bigg\{ &x(k)^\T Q(k)x(k)+(\tilde{u}(k)+K(k)x(k))^\T R(k)(\tilde{u}(k)+K(k)x(k)) \\&+ (A(k)x(k)+B(k)(\tilde{u}(k)+K(k)x(k)))^\T P(k+1) \\ & \,\,\,\,\times (A(k)x(k)+B(k)(\tilde{u}(k)+K(k)x(k))) \big| x(k)\bigg\} \\
=\mathbb{E} \bigg\{ &x(k)^\T Q(k)x(k)+\tilde{u}(k)^\T R(k)\tilde{u}(k)+x(k)^\T K(k)^\T R(k) K(k) x(k)\\&+x(k)^\T(A(k)+B(k)K(k))^\T P(k+1)(A(k)+B(k)K(k))x(k)
\\ &+\tilde{u}(k)^\T [B(k)^\T P(k+1)(A(k)+B(k)K(k))+R(k)K(k)]x(k)
\\ &+x(k)^\T [B(k)^\T P(k+1)(A(k)+B(k)K(k))+R(k)K(k)]^\T\tilde{u}(k)
\\ &+\tilde{u}(k)^\T B(k)^\T P(k+1)B(k)\tilde{u}(k) \big| x(k)\bigg\} 
\\ \geq \mathbb{E} \bigg\{ &x(k)^\T Q(k)x(k)+ x(k)^\T K(k)^\T R(k) K(k) x(k)\\&+x(k)^\T(A(k)+B(k)K(k))^\T P(k+1)(A(k)+B(k)K(k))x(k)\big| x(k)\bigg\} 
\end{align*}
where the inequality follows from the facts that $B(k)^\T P(k+1)(A(k)+B(k)K(k))+R(k)K(k))=0$, by definition of $K(k)$, and that $\tilde{u}(k)^\T (R(k)+B(k)^\T P(k+1)B(k))\tilde{u}(k)\geq 0$. This indeed proves that $u(k)=K(k)x(k)$ is a minimizer of $\mathbb{E}\{x(k)^\T Q(k)x(k)\hspace{-.04in}+\hspace{-.04in}u(k)^\T R(k)u(k) \hspace{-.04in}+\hspace{-.04in} V_{k+1}(A(k)x(k)+B(k)u(k))|x(k)\}$. Now, the recursive update equation in~\eqref{eqn:recursive:tho:3} can be readily extracted from plugging in the optimal controller $u(k)=K(k)x(k)$ into~\eqref{eqn:dynamicprogramming:tho:3}. This concludes the proof.
\fi
\end{proof}

This result can be extended to the infinite-horizon cost function. However, we first need to present the following definition.

\begin{definition} \label{def:3} System~(\ref{eqn:sys}) is called mean square stabilizable under full model information if there exists a mapping $\Gamma:\mathbb{R}^{n\times n} \rightarrow \mathbb{R}^{m\times n}$ such that the closed-loop system with controller $u(k)=\Gamma(A(k))x(k)$ is mean square stable.
\end{definition}

\begin{theorem} \label{tho:4} Suppose~(\ref{eqn:sys}) satisfies Assumption~\ref{assmp:1} and is mean square stabilizable under full model information. The solution of the infinite-horizon optimal control design problem with full model information is then given by
\begin{equation}
\begin{split}
u(k)=-(R+B^\T PB)^{-1}B^\T PA(k)x(k),
\end{split}
\end{equation}
where $P$ is the unique finite positive-definite solution of the modified discrete algebraic Riccati equation
\begin{equation} \label{eqn:modRiccati}
\begin{split}
P=Q+\mathbf{R}(\bar{A},P,B,R)+\sum_{i=1}^N \mathbb{E} \left\{ \mathbf{R}(\tilde{A}_i(k),P,B,R)\right\}.
\end{split}
\end{equation}
Furthermore, this controller mean square stabilizes the system and
$\inf_{\{u(k)\}_{k=0}^{\infty}} J_\infty(x_0,\{u(k)\}_{k=0}^{\infty})=x_0^\T P x_0.$
\end{theorem}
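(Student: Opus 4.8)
The plan is to follow the proof of Theorem~\ref{tho:2} almost verbatim, replacing the limited-information Riccati recursion by its full-information counterpart from Theorem~\ref{tho:3}. Accordingly, I would define the operator $f:\mathcal{S}_+^n\rightarrow\mathcal{S}_+^n$ by
\begin{equation*}
f(X)=Q+\bar{A}^\T\left(X-XB(R+B^\T XB)^{-1}B^\T X\right)\bar{A}+\sum_{i=1}^N \mathbb{E}\left\{\tilde{A}_i^\T\left(X-XB(R+B^\T XB)^{-1}B^\T X\right)\tilde{A}_i\right\},
\end{equation*}
so that the modified Riccati equation~\eqref{eqn:modRiccati} reads exactly $P=f(P)$. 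The only structural change relative to Theorem~\ref{tho:2} is that every stochastic term now carries the full input matrix $B$ and the full weight $R$ instead of $B_i$ and $R_{ii}$. Applying the matrix inversion identity of part~2 of Subsection~3.5.2 in~\cite{Handbook1996} as in~\eqref{eqn:proof:f_definition}, I would rewrite $f$ as $f(X)=Q+\bar{A}^\T(X^{-1}+BR^{-1}B^\T)^{-1}\bar{A}+\sum_i\mathbb{E}\{\tilde{A}_i^\T(X^{-1}+BR^{-1}B^\T)^{-1}\tilde{A}_i\}$, from which monotonicity ($X\geq Y\geq0$ implies $f(X)\geq f(Y)>0$, using $Q\in\mathcal{S}_{++}^n$) is immediate.

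Next I would set $X_0=0$ and $X_{i+1}=f(X_i)$ and show, by the same induction as in Theorem~\ref{tho:2}, that $\{X_i\}$ is monotonically increasing with $X_i\in\mathcal{S}_{++}^n$ for $i\geq1$; Theorem~\ref{tho:3} then identifies $x_0^\T X_T x_0=\inf_{\{u(k)\}_{k=0}^{T-1}}J_T(x_0,\{u(k)\}_{k=0}^{T-1})$.

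The main obstacle, and the only step that genuinely differs, is proving that $\{X_i\}$ is uniformly upper bounded. Here the constant-gain hypothesis of Theorem~\ref{tho:2} (Definition~\ref{def:1}) is unavailable, so instead I would import the construction from the proof of Corollary~\ref{cor:1}: choose a full-information gain $\Gamma(A(k))$ satisfying Definition~\ref{def:3}, bound $x_0^\T X_T x_0$ by the closed-loop cost $\mathbb{E}\{\sum_{k=0}^{T-1}x(k)^\T(Q+\Gamma(A(k))^\T R\Gamma(A(k)))x(k)\}$, and analyse the resulting linear matrix recursion through the Kronecker operator $\bar{U}=\mathbb{E}\{(A(k)+B\Gamma(A(k)))^\T\otimes(A(k)+B\Gamma(A(k)))^\T\}$. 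Exactly as in Claim~1 of that proof, mean square stability under full model information forces $\bar{U}^k\to0$, hence $\max_j|\lambda_j(\bar{U})|<1$, so $\sum_{k=0}^\infty\bar{U}^k=(I-\bar{U})^{-1}$ converges and yields a finite $W$ with $X_i\leq W$ for all $i$. The delicate bookkeeping is to ensure that $\bar{U}$ is well defined and time invariant; this is where I expect to need the stationarity of the parameter process, so I would either lean on the constant distribution of $\{A(k)\}$ used in Corollary~\ref{cor:1} or restrict to gains whose relevant second moments are already determined by $\mathbb{E}\{A(k)\}$ and $\mathbb{E}\{A(k)\otimes A(k)\}$ of Assumption~\ref{assmp:1}.

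With upper-boundedness in hand, the increasing bounded sequence $\{X_i\}$ converges to some $X\in\mathcal{S}_{++}^n$, and the remaining steps mirror Theorem~\ref{tho:2} exactly: a contrapositive argument shows any $Z\in\mathcal{S}_+^n$ with $f(Z)=Z$ satisfies $Z\geq X_i$ for all $i$, and comparing finite-horizon costs with terminal penalty $Z$ against those with terminal penalty $X_T$ forces $\mathbb{E}\{x^*(T)^\T Z x^*(T)\}\to0$ and hence $X=Z$, giving uniqueness; the positivity of $Q$ together with finiteness of the optimal cost yields $\mathbb{E}\{x^*(T)^\T x^*(T)\}\to0$, i.e.\ mean square stability of the closed loop; and the optimal value equals $x_0^\T P x_0$ with $P=X$.
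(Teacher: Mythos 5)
Your proposal is correct and follows essentially the same route as the paper, whose entire proof of this theorem consists of the remark that it is ``similar to the proofs of Theorem~\ref{tho:2} and Corollary~\ref{cor:1}'' --- precisely the combination you execute: Theorem~\ref{tho:2}'s monotone iteration $X_{i+1}=f(X_i)$ with the full-information Riccati operator (identified with the finite-horizon optimal cost via Theorem~\ref{tho:3}), plus Corollary~\ref{cor:1}'s Kronecker-operator argument, now driven by a stabilizing gain $\Gamma(A(k))$ from Definition~\ref{def:3}, to obtain the uniform upper bound, followed by the same contrapositive uniqueness and stability argument. Your caveat about the time-invariance of $\bar{U}$ is also well placed: the theorem is stated under Assumption~\ref{assmp:1} alone, whereas the boundedness argument of Corollary~\ref{cor:1} uses the distributional stationarity of Assumption~\ref{assmp:1a} (a possibly nonlinear $\Gamma$ makes $\mathbb{E}\{(A(k)+B\Gamma(A(k)))^\T\otimes(A(k)+B\Gamma(A(k)))^\T\}$ depend on more than the first two moments), a wrinkle the paper glosses over and which either of your two proposed fixes legitimately patches.
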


\begin{proof} The proof is similar to the proofs of Theorem~\ref{tho:2} and Corollary~\ref{cor:1}. \end{proof}

\setcounter{example}{0}
\begin{example}[Cont'd] \label{example:3}  Following Theorem~\ref{tho:4}, the optimal control design with full model information is
\begin{equation*}
\begin{split}
u^{\scriptsize{\mbox{FMI}}}(k)=\matrix{cccc}{
   42.7701+7.9708\alpha_1(k) &  -1.6741 & -29.1868-0.1035\alpha_2(k) &   0.1041 \\
  -23.2274-0.1215\alpha_1(k) &   0.1757 &  34.4246+6.7725\alpha_2(k) &  -1.7330}x(k).
\end{split}
\end{equation*}
Note that the gain of controller~$i$ depends on the global model parameters. \hfill\newqed 
\end{example}

\section{Performance Degradation under Model Information Limitation} \label{sec:Competitive Ratio}
In this section, we study the value of the plant model information using the closed-loop performance degradation caused by lack of full model information in the control design procedure. The performance degradation is captured using the ratio of the closed-loop performance of the optimal controller with limited model information to the closed-loop performance of the optimal controller with global plant model information. Let $\{u^{\scriptsize{\mbox{LMI}}}(k)\}_{k=0}^\infty$ and $\{u^{\scriptsize{\mbox{FMI}}}(k)\}_{k=0}^\infty$ denote the optimal controller with limited model information (Theorem~\ref{tho:2}) and the optimal controller with full model information (Theorem~\ref{tho:4}), respectively. We define the performance degradation ratio as
$$
r=\sup_{x_0\in\mathbb{R}^n}\frac{J_\infty(x_0,\{u^{\scriptsize{\mbox{LMI}}}(k)\}_{k=0}^{\infty})}
{J_\infty(x_0,\{u^{\scriptsize{\mbox{FMI}}}(k)\}_{k=0}^{\infty})}.
$$
Note that $r\geq 1$ since the optimal controller with full model information always outperforms the optimal controller with limited model information.
\setcounter{example}{0}
\begin{example}[Cont'd] \label{example:4} In this example, we compare the closed-loop performance of the optimal controllers under different information regimes. We have already calculated the optimal controller with limited model information as well as the optimal controller with full model information for this numerical example. Now, let us find the optimal controller using statistical model information based on~\cite{Koning1982}. Using Theorem~5.2 from~\cite{Koning1982}, we get
\begin{equation*}
\begin{split}
u^{\scriptsize{\mbox{SMI}}}(k)=\matrix{cccc}{
  41.9043 &  -1.7873 & -29.3969 &  -0.0121 \\
  -23.3180 &  0.0435 & 32.7901 &  -1.8779}x(k).
\end{split}
\end{equation*}
Note how these three control laws depend on the plant model parameters. The control $u^{\scriptsize{\mbox{SMI}}}(k)$ has a static gain depending on the statistical information of the $A$-matrix, while $u^{\scriptsize{\mbox{FMI}}}(k)$ and $u^{\scriptsize{\mbox{LMI}}}(k)$ depend on the actual realizations of the stochastic parameters. Now, we can explicitly compute the performance degradation ratio
$$
r=\sup_{x_0\in\mathbb{R}^n}\frac{x_0^\T P^{\scriptsize{\mbox{LMI}}} x_0}
{x_0^\T P^{\scriptsize{\mbox{FMI}}} x_0}=1+2.266\times 10^{-4}.
$$
This shows that the performance of the optimal controller with limited model information is practically the same as the performance of the optimal controller with full model information. It is interesting to note that with access to (precise) local model information, one can expect a huge improvement in the closed-loop performance in comparison to the optimal controller with only statistical model information because
$$
\sup_{x_0\in\mathbb{R}^n}\frac{x_0^\T P^{\scriptsize{\mbox{SMI}}} x_0}
{x_0^\T P^{\scriptsize{\mbox{LMI}}} x_0}=5.8790.
$$
\hfill\newqed
\end{example}

Next we derive an upper bound for the performance degradation ratio $r$. We do that for fully-actuated systems.

\begin{assumption} \label{assmp:2} All subsystems~(\ref{eqn:eachsubsystem}) are fully-actuated; i.e., $B_{ii}\in\mathbb{R}^{n_i\times n_i}$ and $\underline{\sigma}(B_{ii})\geq \epsilon>0$ for all $1\leq i\leq N$, where $\underline{\sigma}(\cdot)$ denotes the smallest singular value of a matrix.
\end{assumption}

To simplify the presentation, we also assume that $Q=R=I$. This is without loss of generality since the change of variables $(x',u')= (Q^{1/2}x,R^{1/2}u)$ transforms the cost function and state space representation into
\begin{equation*}
J_\infty(x_0,\{u'(k)\}_{k=0}^{\infty})=\lim_{T\rightarrow \infty} \mathbb{E}\left\{\sum_{k=0}^{T-1} x'(k)^\top x'(k)+u'(k)^\top u'(k)\right\}\hspace{-.04in},
\end{equation*}
and
\begin{equation*}
\begin{split}
x'(k+1)&=Q^{1/2}A(k)Q^{-1/2}x'(k)+Q^{1/2}BR^{-1/2}u'(k)=A'(k)x'(k)+B'u'(k).
\end{split}
\end{equation*}

The next theorem presents an upper bound for the performance degradation.

\begin{theorem} \label{tho:upper} Suppose~(\ref{eqn:sys}) satisfies Assumptions~\ref{assmp:1} and~\ref{assmp:2} and is mean square stabilizable under limited model information. The performance degradation ratio is then upper bounded as $r\leq 1+1/\epsilon^2$ where $\epsilon>0$ is defined in Assumption~\ref{assmp:2}.
\end{theorem}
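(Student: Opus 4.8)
The plan is to recast the scalar bound as a Loewner-order inequality between the two cost matrices and then certify it by exhibiting a single admissible limited-information controller. Since both optimal costs are quadratic, $J_\infty(x_0,\{u^{\mathrm{LMI}}\})=x_0^\T P^{\mathrm{LMI}}x_0$ and $J_\infty(x_0,\{u^{\mathrm{FMI}}\})=x_0^\T P^{\mathrm{FMI}}x_0$ with $P^{\mathrm{FMI}}>0$, the supremum defining $r$ is the largest generalized eigenvalue of the pencil $(P^{\mathrm{LMI}},P^{\mathrm{FMI}})$; hence $r\leq 1+1/\epsilon^2$ is equivalent to $P^{\mathrm{LMI}}\leq(1+1/\epsilon^2)P^{\mathrm{FMI}}$. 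Because $x_0^\T P^{\mathrm{LMI}}x_0$ is the infimum of $J_\infty$ over \emph{all} admissible controllers, it suffices to produce one limited-information controller whose cost is at most $(1+1/\epsilon^2)\,x_0^\T P^{\mathrm{FMI}}x_0$ for every $x_0$. Throughout I use the normalization $Q=R=I$ already in force, so $BR^{-1}B^\T=BB^\T$.

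First I would build that controller from the full-information solution of Theorem~\ref{tho:4}. Write $K=-(I+B^\T P^{\mathrm{FMI}}B)^{-1}B^\T P^{\mathrm{FMI}}\bar A$ for the mean-optimal gain and $\hat A_i(k)=[\tilde A_{i1}(k)\,\cdots\,\tilde A_{iN}(k)]$ for the random rows of subsystem~$i$. Exploiting full actuation, let controller~$i$ apply $\bar u_i(k)=[Kx(k)]_i-B_{ii}^{-1}\hat A_i(k)x(k)$; this depends only on $\{A_{ij}(k)\}_{j=1}^N$ and on the globally known moments, so it is admissible in the sense of Definition~\ref{assmp:3}. The local deadbeat terms cancel the stochastic part of the dynamics exactly, since $\sum_{i=1}^N B_iB_{ii}^{-1}\hat A_i(k)=\tilde A(k)$, so the closed loop collapses to the \emph{deterministic} recursion $x(k+1)=A_{\mathrm{cl}}x(k)$ with $A_{\mathrm{cl}}=\bar A+BK$. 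Thus $x(k)=A_{\mathrm{cl}}^k x_0$ is nonrandom and the per-step expected cost equals $x(k)^\T\Theta\,x(k)$ with $\Theta=I+K^\T K+\mathbb{E}\{\Xi^\T\Xi\}$, where $\Xi(k)$ stacks the blocks $B_{ii}^{-1}\hat A_i(k)$ and $\mathbb{E}\{\Xi\}=0$ annihilates the cross term. Summing the deterministic trajectory gives $P^{\mathrm{LMI}}\leq\sum_{k\geq0}(A_{\mathrm{cl}}^k)^\T\Theta A_{\mathrm{cl}}^k$.

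Next I would show that the full-information value admits the \emph{same} Lyapunov representation with a lighter weight. Completing the square in~(\ref{eqn:modRiccati}) yields $\mathbf R(\bar A,P^{\mathrm{FMI}},B,R)=A_{\mathrm{cl}}^\T P^{\mathrm{FMI}}A_{\mathrm{cl}}+K^\T K$, hence $P^{\mathrm{FMI}}-A_{\mathrm{cl}}^\T P^{\mathrm{FMI}}A_{\mathrm{cl}}=\Theta_F$ where $\Theta_F=I+K^\T K+\mathcal S_F$ and $\mathcal S_F=\sum_i\mathbb{E}\{\mathbf R(\tilde A_i,P^{\mathrm{FMI}},B,R)\}$. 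This equality also certifies $A_{\mathrm{cl}}$ is Schur (its right side dominates $I>0$), so both series converge and $P^{\mathrm{FMI}}=\sum_{k\geq0}(A_{\mathrm{cl}}^k)^\T\Theta_F A_{\mathrm{cl}}^k$. Since the Lyapunov map $Y\mapsto\sum_k(A_{\mathrm{cl}}^k)^\T Y A_{\mathrm{cl}}^k$ is linear and monotone, the target reduces to the weight comparison $\Theta\leq(1+1/\epsilon^2)\Theta_F$, and because $\mathcal S_F\leq\Theta_F$ it is enough to prove $\mathbb{E}\{\Xi^\T\Xi\}\leq(1+1/\epsilon^2)\mathcal S_F$.

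This last comparison is the heart of the argument and the step I expect to be the main obstacle, because the constant must depend only on the \emph{smallest} singular value $\epsilon$ and not on $\|B_{ii}\|$; the crude estimate $(B_{ii}B_{ii}^\T)^{-1}\leq\epsilon^{-2}I$ alone is too lossy and reintroduces $\|B_{ii}\|$ when one tries to bound $\mathbb{E}\{\Xi^\T\Xi\}$ by $\mathcal S_F$. Writing $G_i=B_{ii}B_{ii}^\T$ and using the matrix-inversion identity from the proof of Theorem~\ref{tho:2} to get $\mathbf R(\tilde A_i,P^{\mathrm{FMI}},B,R)=\hat A_i^\T\big[\big((P^{\mathrm{FMI}})^{-1}+BB^\T\big)^{-1}\big]_{ii}\hat A_i$ together with $\mathbb{E}\{\Xi^\T\Xi\}=\sum_i\mathbb{E}\{\hat A_i^\T G_i^{-1}\hat A_i\}$, it suffices to establish the per-subsystem bound $G_i^{-1}\leq(1+1/\epsilon^2)\big[\big((P^{\mathrm{FMI}})^{-1}+BB^\T\big)^{-1}\big]_{ii}$. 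I would prove it from three facts: the $(i,i)$ block of an inverse dominates the inverse of the $(i,i)$ block (Schur complement), so that block is at least $\big([(P^{\mathrm{FMI}})^{-1}]_{ii}+G_i\big)^{-1}$ since $BB^\T$ is block-diagonal with blocks $G_i$; the inequality $P^{\mathrm{FMI}}\geq Q=I$ forces $[(P^{\mathrm{FMI}})^{-1}]_{ii}\leq I$; and full actuation gives $G_i\geq\epsilon^2 I$. Combining, $[(P^{\mathrm{FMI}})^{-1}]_{ii}+G_i\leq I+G_i\leq(1+1/\epsilon^2)G_i$, and inverting gives exactly $G_i^{-1}\leq(1+1/\epsilon^2)\big([(P^{\mathrm{FMI}})^{-1}]_{ii}+G_i\big)^{-1}$, which chains to the claimed bound. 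Feeding $\Theta\leq(1+1/\epsilon^2)\Theta_F$ back through the Lyapunov map yields $P^{\mathrm{LMI}}\leq(1+1/\epsilon^2)P^{\mathrm{FMI}}$, and therefore $r\leq 1+1/\epsilon^2$.
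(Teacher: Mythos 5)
Your proposal is correct, but it takes a genuinely different route from the paper. The paper certifies the bound with the crudest possible test controller: the pure deadbeat law $u(k)=-B^{-1}A(k)x(k)$, which is admissible because $B$ is block diagonal and which drives the state to zero in one step, so that $P^{\mathrm{LMI}}\leq I+\mathbb{E}\{A(k)^\T B^{-\T}B^{-1}A(k)\}$ with no closed-loop stability or series-convergence analysis at all; it then lower-bounds the full-information cost by a single monotonicity step, $P^{\mathrm{FMI}}\geq Q=I$ giving $P^{\mathrm{FMI}}\geq I+\mathbb{E}\{A(k)^\T(I+BB^\T)^{-1}A(k)\}$, and concludes by checking blockwise that $\beta(I+G_i)^{-1}\geq G_i^{-1}$ for $\beta\geq 1+1/\epsilon^2$, where $G_i=B_{ii}B_{ii}^\T$. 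You instead retain the full-information mean gain $K$ and spend the deadbeat action only on cancelling the zero-mean fluctuation $\tilde A(k)$, which collapses the closed loop to the deterministic $A_{\mathrm{cl}}=\bar A+BK$; both costs then become Lyapunov series in $A_{\mathrm{cl}}$ (requiring you to establish that $A_{\mathrm{cl}}$ is Schur, that the Lyapunov sum is monotone, and the block-inverse inequality $[M^{-1}]_{ii}\geq (M_{ii})^{-1}$), and the comparison reduces to the per-step weights. All of these extra steps are standard and correctly invoked, and your admissibility check for the hybrid controller is sound since $K$ and $P^{\mathrm{FMI}}$ are computable from the moments alone. What each approach buys: the paper's argument is shorter and more elementary precisely because its test controller annihilates the state immediately; yours yields a sharper intermediate estimate, $P^{\mathrm{LMI}}\leq\sum_{k\geq 0}(A_{\mathrm{cl}}^k)^\T\Theta A_{\mathrm{cl}}^k$, since your test controller strictly outperforms pure deadbeat for finite parameter variances (consistent with the paper's Remark~\ref{remark:deadbeat}, which says deadbeat is optimal only in the infinite-variance limit). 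The final constant nevertheless coincides, because both proofs ultimately rest on the same two facts --- $P^{\mathrm{FMI}}\geq Q=I$ and $G_i\geq\epsilon^2 I$ --- funneled through essentially the same block inequality $G_i^{-1}\leq(1+1/\epsilon^2)\left(X_i+G_i\right)^{-1}$ with $0\leq X_i\leq I$ (the paper takes $X_i=I$, you take $X_i=[(P^{\mathrm{FMI}})^{-1}]_{ii}$), so neither argument tightens the stated bound $r\leq 1+1/\epsilon^2$.
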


\begin{proof} Using the modified discrete algebraic Riccati equation~(\ref{eqn:modRiccati}) in Theorem~\ref{tho:4}, the cost of the optimal control design with full model information $J_\infty(x_0,\{u^{\scriptsize{\mbox{FMI}}}(k)\}_{k=0}^{\infty})=x_0^\T P^{\scriptsize{\mbox{FMI}}} x_0$ is equal to
\begin{equation} \label{eqn:proofcost}
\begin{split}
x_0^\T P^{\scriptsize{\mbox{FMI}}}& x_0=\;x_0^\T Qx_0+x_0^\T \mathbf{R}(\bar{A},P^{\scriptsize{\mbox{FMI}}},B,I)x_0+\sum_{i=1}^N x_0^\T \mathbb{E}  \left\{ \mathbf{R}(\tilde{A}_i(k),P^{\scriptsize{\mbox{FMI}}},B,I)\right\} x_0.
\end{split}
\end{equation}
In addition, we know that $P^{\scriptsize{\mbox{FMI}}}\geq Q=I$, which (using the proof of Theorem~\ref{tho:2}) results in
\begin{eqnarray}
\mathbf{R}(\bar{A},P^{\scriptsize{\mbox{FMI}}},B,I)\hspace{-.08in}&\geq&\hspace{-.08in} \mathbf{R}(\bar{A},I,B,I), \label{eqn:proof4.1} \\
\mathbf{R}(\tilde{A}_i(k),P^{\scriptsize{\mbox{FMI}}},B,I) \hspace{-.08in}&\geq&\hspace{-.08in} \mathbf{R}(\tilde{A}_i(k),I,B,I). \label{eqn:proof4.2}
\end{eqnarray}
Substituting~(\ref{eqn:proof4.1})--(\ref{eqn:proof4.2}) inside~(\ref{eqn:proofcost}) gives
\begin{equation*}
\begin{split}
x_0^\T P^{\scriptsize{\mbox{FMI}}} x_0
&\geq x_0^\T(I+\bar{A}^\T (I+BB^\T)^{-1} \bar{A})x_0+\sum_{i=1}^N x_0^\T \mathbb{E} \left\{ \tilde{A}_i(k)^\T (I+BB^\T)^{-1} \tilde{A}_i(k)\right\}x_0\\ 
&=    x_0^\T x_0 + x_0^\T\mathbb{E}\{A(k)^\T(I+BB^\T)^{-1}A(k)\}x_0, 
\end{split}
\end{equation*}
where the equality follows from the fact that $\tilde{A}_i(k)$ and $\tilde{A}_j(k)$ for $i\neq j$ are independent random variables with zero mean. On the other hand, for a given $x_0\in\mathbb{R}^n$, the cost of the optimal control design with limited model information $J_\infty(x_0,\{u^{\scriptsize{\mbox{LMI}}}(k)\}_{k=0}^{\infty})=x_0^\T P^{\scriptsize{\mbox{LMI}}} x_0$ is upper-bounded by
$$
x_0^\T P^{\scriptsize{\mbox{LMI}}} x_0 \leq \mathbb{E}\left\{\sum_{k=0}^{+\infty} x(k)^\T x(k)+u(k)^\T u(k) \right\},
$$
where $u(k)=-B^{-1}A(k)x(k)$ and $x(k)$ is the state vector of the system when this control sequence is applied to the system. This is true since the deadbeat control design strategy $u(k)=-B^{-1}A(k)x(k)$ uses only local model information for designing each controller~\cite{Farokhi-thesis2012}. Therefore, 
\begin{equation*}
\begin{split}
x_0^\T P^{\scriptsize{\mbox{LMI}}} x_0 &\leq \mathbb{E}\left\{ x_0^\T (I+A(k)^\T B^{-\T}B^{-1}A(k)) x_0 \right\}.
\end{split}
\end{equation*}
Let us define the set $\mathcal{M}_r=\left\{ \bar{\beta}\in\mathbb{R} \;|\; r\leq \bar{\beta} \right\}$ where $r$ is the performance degradation ratio. If $\beta\in\mathbb{R}$ satisfy $\beta P^{\scriptsize{\mbox{FMI}}}-P^{\scriptsize{\mbox{LMI}}}\geq 0$, then $\beta\in\mathcal{M}_r$. We have
\begin{equation} \label{eqn:PFMIPLMI}
\beta P^{\scriptsize{\mbox{FMI}}}-P^{\scriptsize{\mbox{LMI}}} \geq (\beta-1) I+
\mathbb{E}\{A(k)^\T\left[\beta(I+BB^\T)^{-1}-B^{-\T}B^{-1}\right]A(k)\}.
\end{equation}
Note that if $\beta\geq 1+1/\epsilon^2$, we get $\beta(I+B_{ii}B_{ii}^\T)^{-1} -B_{ii}^{-\T}B_{ii}^{-1}\geq 0$ and therefore, $\beta(I+BB^\T)^{-1}-B^{-\T}B^{-1}\geq 0$. As a result, if $\beta\geq 1+1/\epsilon^2$, the right hand side of~\eqref{eqn:PFMIPLMI} is a positive-semidefinite matrix and, subsequently, $\beta P^{\scriptsize{\mbox{FMI}}}-P^{\scriptsize{\mbox{LMI}}} \geq 0$. Hence, $[1+1/\epsilon^2,+\infty)\subseteq \mathcal{M}_r$. This shows that
$$
r=\sup_{x_0\in\mathbb{R}^n}\frac{x_0^\T P^{\scriptsize{\mbox{LMI}}} x_0}{x_0^\T P^{\scriptsize{\mbox{FMI}}} x_0}\leq 1+\frac{1}{\epsilon^2}.
$$
\end{proof}

As the power network in Example~\ref{example:1} is not fully-actuated, we consider another power network example to the illustrate the previous result.

\begin{example} Consider DC power generators, such as solar farms and batteries. Suppose these sources are connected to AC transmission lines through DC/AC converters that are equipped with a droop-controller~\cite{fd-fb:12i,simpson2012droop}. Let us assume that both power generators in Figure~1 are such DC power generators equipped with droop-controlled converters. We can then model this power network as 
\begin{equation*}
\begin{split}
\dot{\delta}_1(t)&=\frac{1}{D_1}\big[P_{1}(t)-c_{12}^{-1}\sin(\delta_1(t)-\delta_2(t)) -c_{1}^{-1}\sin(\delta_1(t))-D_1\omega_1(t) \big], \\
\dot{\delta}_2(t)&=\frac{1}{D_2}\big[P_{2}(t)-c_{12}^{-1}\sin(\delta_2(t)-\delta_1(t)) -c_{2}^{-1}\sin(\delta_2(t))-D_2\omega_2(t) \big],
\end{split}
\end{equation*}
where $\delta_i(t)$, $1/D_i>0$, and $P_{i}(t)$ are respectively the phase angle of the terminal voltage of converter~$i$, its converter droop-slope, and its input power. The power network parameters in this example are the same as the ones in Example~\ref{example:1}, except $D_1=D_2=1.0$. Now, similarly to Example~\ref{example:1}, we find the equilibrium point of this nonlinear system, linearize it around this equilibrium, and then, discretize the system with sampling time $\Delta T=300\,\mathrm{ms}$ to get  
\begin{equation*}
\begin{split}
\hspace{-.04in}\matrix{c}{ \hspace{-.08in} \Delta\delta_1(k+1)  \hspace{-.08in} \\ \hspace{-.08in} \Delta\delta_2(k+1) \hspace{-.08in}} \hspace{-.06in}=\hspace{-.06in}\matrix{cc}{ \zeta_1& \frac{\Delta T\cos(\delta_1^*-\delta_2^*)}{c_{12}D_1} \\ \frac{\Delta T\cos(\delta_2^*-\delta_1^*)}{c_{12}D_2} & \zeta_2} \hspace{-.08in}\matrix{c}{ \hspace{-.08in} \Delta\delta_1(k) \hspace{-.08in} \\ \hspace{-.08in} \Delta\delta_2(k) \hspace{-.08in}}
\hspace{-.06in}+\hspace{-.06in}\matrix{cc}{\hspace{-.08in}u_1(k) \hspace{-.08in} \\ \hspace{-.08in}u_2(k)\hspace{-.08in}}\hspace{-.04in},
\end{split}
\end{equation*}
where 
$
\zeta_1=1\hspace{-.04in}-\hspace{-.04in}\Delta T(c_{12}^{-1}\cos(\delta_1^*-\delta_2^*) +c_1^{-1}\cos(\delta_1^*))/D_1$ and $
\zeta_2=1\hspace{-.04in}-\hspace{-.04in}\Delta T(c_{12}^{-1}\cos(\delta_2^*+\delta_1^*)- c_2^{-1}\cos(\delta_2^*))/D_2.
$
Consider the same variation of the local loads as in Example~\ref{example:1}. We get the discrete-time linear with stochastically-varying parameters 
$$
x(k+1)=Ax(k)+Bu(k) 
$$
where $x(k)=[\Delta\delta_1(k) \; \Delta\delta_2(k)]^\top$, $u(k)=[ u_1(k) \; u_2(k)]^\top,$ and
$$
B=\matrix{cccc}{1 & 0 \\ 0 & 1}, \hspace{.2in} A(k)=\matrix{cc}{-0.1635-0.2075\alpha_1(k)  &  0.7486 \\ 0.7486  & -0.1897-0.0877\alpha_2(k)}.
$$
where $\alpha_1(k)\sim\mathcal{N}(0,0.1)$ and $\alpha_2(k)\sim\mathcal{N}(0,0.3)$.
The goal is to optimize the performance criterion
$$
J=\mathbb{E}\left\{\sum_{k=0}^\infty x(k)^\top x(k)+u(k)^\top u(k)\right\}.
$$
Following Theorem~\ref{tho:4}, we can calculate the optimal controller with full model information as
\begin{equation*} \label{eqn:NOC1}
\begin{split}
u^{\scriptsize{\mbox{FMI}}}(k)=\matrix{cc}{
0.1166+0.1185\alpha_1(k)   & -0.4334-0.0027\alpha_2(k) \\
-0.4334-0.0064\alpha_1(k)  &  0.1317+ 0.0502\alpha_2(k)}x(k).
\end{split}
\end{equation*}
Furthermore, using Theorem~\ref{tho:2}, we can calculate the optimal controller with limited model information as
\begin{equation*} \label{eqn:NOC2}
\begin{split}
u^{\scriptsize{\mbox{LMI}}}(k)=\matrix{cc}{
    0.1166+0.1190\alpha_1(k) &  -0.4334 \\
   -0.4334 &   0.1317+0.0504\alpha_2(k)}x(k).
\end{split}
\end{equation*}
It is easy to see that
$$
r=\sup_{x_0\in\mathbb{R}^n}\frac{x_0^\T P^{\scriptsize{\mbox{LMI}}} x_0}
{x_0^\T P^{\scriptsize{\mbox{FMI}}} x_0}=1+1.2660\times 10^{-6}\leq 1+1/\epsilon^2=2,
$$
since $\epsilon=1$. In this example, the upper bound computed in Theorem~\ref{tho:upper} is not tight. \hfill\newqed
\end{example}

\begin{remark} \label{remark:deadbeat} Under Assumption~\ref{assmp:2}, when the variances of the plant model parameters tend to infinity, the optimal controller with limited model information (introduced in Theorem~\ref{tho:2}) approaches the deadbeat control law. The intuition behind this result is that when the model information of the other subsystems is inaccurate, the deadbeat control law (which decouples our subsystem from the rest of the plant) is the best controller to use. The presented approach balances in a natural way the use of statistical information about the plant parameters with precise knowledge of their realizations.
\end{remark}

\setcounter{example}{1}
\begin{example}[Cont'd] Let us consider the case where variances of the plant model parameters are very large. Hence, we assume $\alpha_1(k)\sim\mathcal{N}(0,1000)$ and $\alpha_2(k)\sim\mathcal{N}(0,3000)$. Now, the optimal controller with limited model information is given by
\begin{equation*}
\begin{split}
u^{\scriptsize{\mbox{LMI}}}(k)=\matrix{cc}{
    0.1635+ 0.2075 \alpha_1(k) &  -0.7485 \\
   -0.7485 &   0.1897+ 0.0877\alpha_2(k)}x(k),
\end{split}
\end{equation*}
which is practically equal to the deadbeat control law in Remark~\ref{remark:deadbeat:definition}. \hfill\newqed
\end{example}

\section{Conclusion} \label{sec:con}
We presented a statistical framework for the study of control design under limited model information. We found the best performance achievable by a limited model information control design method. We also studied the value of information in control design using the performance degradation ratio. Possible future work will focus on generalizing the results to discrete-time Markovian jump linear systems and to decentralized controllers.

\section{Acknowledgement}
The authors would like to thank C\'{e}dric~Langbort for valuable discussions and suggestions.

\bibliographystyle{ieeetr}
\bibliography{compile_new}

\begin{thebibliography}{10}

\bibitem{FJCDC2012}
F.~Farokhi and K.~H. Johansson, ``Limited model information control design for
  linear discrete-time systems with stochastic parameters,'' in {\em
  Proceedings of the 51st IEEE Conference on Decision and Control}, pp.~855 --
  861, 2012.

\bibitem{swaroop:462}
D.~Swaroop and J.~K. Hedrick, ``Constant spacing strategies for platooning in
  automated highway systems,'' {\em Journal of Dynamic Systems, Measurement,
  and Control}, vol.~121, no.~3, pp.~462--470, 1999.

\bibitem{Horowitz871301}
R.~Horowitz and P.~Varaiya, ``Control design of an automated highway system,''
  {\em Proceedings of the IEEE}, vol.~88, no.~7, pp.~913--925, 2000.

\bibitem{kapila2000spacecraft}
V.~Kapila, A.~G. Sparks, J.~M. Buffington, and Q.~Yan, ``Spacecraft formation
  flying: dynamics and control,'' {\em Journal of Guidance, Control, and
  Dynamics}, vol.~23, no.~3, pp.~561--564, 2000.

\bibitem{Giulietti887447}
F.~Giulietti, L.~Pollini, and M.~Innocenti, ``Autonomous formation flight,''
  {\em IEEE Control Systems}, vol.~20, no.~6, pp.~34--44, 2000.

\bibitem{Braun2003229}
M.~W. Brauna, D.~E. Rivera, M.~E. Floresa, W.~M. Carlyleb, and K.~G. Kempf, ``A
  model predictive control framework for robust management of multi-product,
  multi-echelon demand networks,'' {\em Annual Reviews in Control}, vol.~27,
  no.~2, pp.~229--245, 2003.

\bibitem{Dunbar2007}
W.~B. Dunbar and S.~Desa, ``Distributed {MPC} for dynamic supply chain
  management,'' in {\em Assessment and Future Directions of Nonlinear Model
  Predictive Control} (R.~Findeisen, F.~Allg\"{o}wer, and L.~T. Biegler, eds.),
  vol.~358 of {\em Lecture Notes in Control and Information Sciences},
  pp.~607--615, 2007.

\bibitem{Massoud2005}
S.~{Massoud Amin} and B.~F. Wollenberg, ``Toward a smart grid: power delivery
  for the 21st century,'' {\em IEEE Power and Energy Magazine}, vol.~3, no.~5,
  pp.~34--41, 2005.

\bibitem{Negenborn2010}
R.~R. Negenborn, Z.~Lukszo, and H.~Hellendoorn, eds., {\em Intelligent
  Infrastructures}, vol.~42.
\newblock Springer, 2010.

\bibitem{witsenhausen1968counterexample}
H.~S. Witsenhausen, ``A counterexample in stochastic optimum control,'' {\em
  SIAM Journal on Control}, vol.~6, no.~1, pp.~131--147, 1968.

\bibitem{levine1971optimal}
W.~Levine, T.~Johnson, and M.~Athans, ``Optimal limited state variable feedback
  controllers for linear systems,'' {\em IEEE Transactions on Automatic
  Control}, vol.~16, no.~6, pp.~785--793, 1971.

\bibitem{wang1973stabilization}
S.~H. Wang and E.~Davison, ``On the stabilization of decentralized control
  systems,'' {\em IEEE Transactions on Automatic Control}, vol.~18, no.~5,
  pp.~473--478, 1973.

\bibitem{blondel1997np}
V.~Blondel and J.~N. Tsitsiklis, ``{NP}-hardness of some linear control design
  problems,'' {\em SIAM Journal on Control and Optimization}, vol.~35, no.~6,
  pp.~2118--2127, 1997.

\bibitem{rotkowitz2006characterization}
M.~Rotkowitz and S.~Lall, ``A characterization of convex problems in
  decentralized control,'' {\em IEEE Transactions on Automatic Control},
  vol.~51, no.~2, pp.~274--286, 2006.

\bibitem{voulgaris2003optimal}
P.~G. Voulgaris, ``Optimal control of systems with delayed observation sharing
  patterns via input--output methods,'' {\em Systems \& Control Letters},
  vol.~50, no.~1, pp.~51--64, 2003.

\bibitem{Siljak1991decentralized}
D.~D. {\v{S}}iljak, {\em Decentralized Control of Complex Systems}.
\newblock Academic Press, 1991.

\bibitem{Farokhi-thesis2012}
F.~Farokhi, ``Decentralized control design with limited plant model
  information,'' KTH Royal Institute of Technology, Licentiate Thesis, 2012.
\newblock \url{http://urn.kb.se/resolve?urn=urn:nbn:se:kth:diva-63858}.

\bibitem{kundur1994power}
P.~Kundur, N.~J. Balu, and M.~G. Lauby, {\em Power system stability and
  control}.
\newblock McGraw-Hill, 1994.

\bibitem{anderson2003power}
P.~M. Anderson and A.-A.~A. Fouad, {\em Power system control and stability}.
\newblock IEEE Press, 2003.

\bibitem{Loparo1985}
K.~Loparo and G.~Blankenship, ``A probabilistic mechanism for small disturbance
  instabilities in electric power systems,'' {\em IEEE Transactions on Circuits
  and Systems}, vol.~32, no.~2, pp.~177--184, 1985.

\bibitem{Brucoli19909}
M.~Brucoli, M.~L. Scala, F.~Torelli, and M.~Trovato, ``A semi-dynamic approach
  to the voltage stability analysis of interconnected power networks with
  random loads,'' {\em International Journal of Electrical Power \& Energy
  Systems}, vol.~12, no.~1, pp.~9--16, 1990.

\bibitem{Wu1986}
F.~Wu and C.-C. Liu, ``Characterization of power system small disturbance
  stability with models incorporating voltage variation,'' {\em IEEE
  Transactions on Circuits and Systems}, vol.~33, no.~4, pp.~406--417, 1986.

\bibitem{langbort2010distributed}
C.~Langbort and J.-C. Delvenne, ``Distributed design methods for linear
  quadratic control and their limitations,'' {\em IEEE Transactions on
  Automatic Control}, vol.~55, no.~9, pp.~2085--2093, 2010.

\bibitem{FLJ2012}
F.~Farokhi, C.~Langbort, and K.~H. Johansson, ``Optimal structured static
  state-feedback control design with limited model information for
  fully-actuated systems,'' {\em Automatica}, vol.~49, no.~2, pp.~326--337,
  2012.

\bibitem{farokhi2011dynamic}
F.~Farokhi and K.~H. Johansson, ``Dynamic control design based on limited model
  information,'' in {\em Proceedings of the 49th Annual Allerton Conference on
  Communication, Control, and Computing}, pp.~1576--1583, 2011.

\bibitem{Swigart5717538}
J.~Swigart and S.~Lall, ``Optimal synthesis and explicit state-space solution
  for a decentralized two-player linear-quadratic regulator,'' in {\em
  Proceedings of the 49th IEEE Conference on Decision and Control},
  pp.~132--137, 2010.

\bibitem{Shah5718116}
P.~Shah and P.~A. Parrilo, ``{$H_2$}-optimal decentralized control over posets:
  A state space solution for state-feedback,'' in {\em Proceedings of the 49th
  IEEE Conference on Decision and Control}, pp.~6722--6727, 2010.

\bibitem{FarokhiSIAM13}
F.~Farokhi, C.~Langbort, and K.~Johansson, ``Decentralized disturbance
  accommodation with limited plant model information,'' {\em SIAM Journal on
  Control and Optimization}, vol.~51, no.~2, pp.~1543--1573, 2013.

\bibitem{farokhi2013optimal}
F.~Farokhi and K.~H. Johansson, ``Optimal control design under structured model
  information limitation using adaptive algorithms,'' 2013.
\newblock Submitted. Preprint: {\tt \footnotesize arXiv:1208.2322[math.OC]}
  \url{http://arxiv.org/abs/1208.2322}.

\bibitem{Koning1982}
W.~L. {De Koning}, ``Infinite horizon optimal control of linear discrete time
  systems with stochastic parameters,'' {\em Automatica}, vol.~18, no.~4,
  pp.~443--453, 1982.

\bibitem{DeKoning1984}
W.~L. {De Koning}, ``Optimal estimation of linear discrete-time systems with
  stochastic parameters,'' {\em Automatica}, vol.~20, no.~1, pp.~113--115,
  1984.

\bibitem{aoki1967optimization}
M.~Aoki, {\em Optimization of stochastic systems: topics in discrete-time
  systems}.
\newblock Academic Press, 1967.

\bibitem{doi:10.1080/00207178408933286}
A.~R. Tiedemann and W.~L. {De Koning}, ``The equivalent discrete-time optimal
  control problem for continuous-time systems with stochastic parameters,''
  {\em International Journal of Control}, vol.~40, no.~3, pp.~449--466, 1984.

\bibitem{Imer2006}
O.~C. Imer, S.~Y\"{u}ksel, and T.~Basar, ``Optimal control of {LTI} systems
  over unreliable communication links,'' {\em Automatica}, vol.~42, no.~9,
  pp.~1429--1439, 2006.

\bibitem{ando1963near}
A.~Ando and F.~M. Fisher, ``Near-decomposability, partition and aggregation,
  and the relevance of stability discussions,'' {\em International Economic
  Review}, vol.~4, no.~1, pp.~53--67, 1963.

\bibitem{sezer1986nested}
M.~Sezer and D.~{\v{S}}iljak, ``Nested $\varepsilon$-decompositions and
  clustering of complex systems,'' {\em Automatica}, vol.~22, no.~3,
  pp.~321--331, 1986.

\bibitem{sethi1998near}
S.~Sethi and Q.~Zhang, ``Near optimization of dynamic systems by decomposition
  and aggregation,'' {\em Journal of optimization theory and applications},
  vol.~99, no.~1, pp.~1--22, 1998.

\bibitem{DerooCDC2012}
F.~Deroo, M.~Ulbrich, B.~D.~O. Anderson, and S.~Hirche, ``Accelerated iterative
  distributed controller synthesis with a {Barzilai-Borwein} step size,'' in
  {\em Proceedings of the 51st IEEE Conference on Decision and Control},
  pp.~4864--4870, 2012.

\bibitem{Mrtensson5400233}
K.~M{\aa}rtensson and A.~Rantzer, ``Gradient methods for iterative distributed
  control synthesis,'' in {\em Proceedings of the 48th IEEE Conference on
  Decision and Control held jointly with the 28th Chinese Control Conference},
  pp.~549--554, 2009.

\bibitem{dunbar2007distributed}
W.~B. Dunbar, ``Distributed receding horizon control of dynamically coupled
  nonlinear systems,'' {\em IEEE Transactions on Automatic Control}, vol.~52,
  no.~7, pp.~1249--1263, 2007.

\bibitem{Giselsson5717026}
P.~Giselsson and A.~Rantzer, ``Distributed model predictive control with
  suboptimality and stability guarantees,'' in {\em Proceedings of the 49th
  IEEE Conference on Decision and Control}, pp.~7272--7277, 2010.

\bibitem{farokhi2012distributed}
F.~Farokhi, I.~Shames, and K.~H. Johansson, ``Distributed {MPC} via dual
  decomposition and alternative direction method of multipliers,'' To appear in
  Distributed MPC Made Easy, Springer, 2013. Preprint: {\tt \footnotesize
  arXiv:1207.3178[math.OC]} \url{http://arxiv.org/abs/1207.3178}.

\bibitem{MishraCDC2012}
A.~Mishra, C.~Langbort, and G.~E. Dullerud, ``A team theoretic approach to
  decentralized control of systems with stochastic parameters,'' in {\em
  Proceedings of the 51st IEEE Conference on Decision and Control},
  pp.~2116--2121, 2012.

\bibitem{swigart2010explicit}
J.~Swigart and S.~Lall, ``An explicit dynamic programming solution for a
  decentralized two-player optimal linear-quadratic regulator,'' in {\em
  Proceedings of the International Symposium on Mathematical Theory of Networks
  and Systems}, pp.~1443--1447, 2010.

\bibitem{doi:10.1080/00207178908559618}
T.~J.~A. Wagenaar and W.~L. {De Koning}, ``Stability and stabilizability of
  chemical reactors modelled with stochastic parameters,'' {\em International
  Journal of Control}, vol.~49, no.~1, pp.~33--44, 1989.

\bibitem{Dombrovskii2003Lyashenko}
V.~V. Dombrovskii and E.~A. Lyashenko, ``A linear quadratic control for
  discrete systems with random parameters and multiplicative noise and its
  application to investment portfolio optimization,'' {\em Automation and
  Remote Control}, vol.~64, no.~10, pp.~1558--1570, 2003.

\bibitem{schenato2007foundations}
L.~Schenato, B.~Sinopoli, M.~Franceschetti, K.~Poolla, and S.~S. Sastry,
  ``Foundations of control and estimation over lossy networks,'' {\em
  Proceedings of the IEEE}, vol.~95, no.~1, pp.~163--187, 2007.

\bibitem{Ghandhari00}
M.~Ghandhari, ``Control {Lyapunov} functions : A control strategy for damping
  of power oscillations in large power systems,'' KTH Royal Institute of
  Technology, Doctoral Thesis, 2000.
\newblock \url{http://urn.kb.se/resolve?urn=urn:nbn:se:kth:diva-3039}.

\bibitem{Handbook1996}
H.~L\"{u}tkepohl, {\em Handbook of matrices}.
\newblock Wiley, 1996.

\bibitem{isaacson1994analysis}
E.~Isaacson and H.~B. Keller, {\em Analysis of Numerical Methods}.
\newblock Dover Books on Mathematics Series, Dover Publications, 1994.

\bibitem{fd-fb:12i}
F.~D\"{o}rfler and F.~Bullo, ``Exploring synchronization in complex oscillator
  networks,'' in {\em Proceedings of the 51st IEEE Conference on Decision and
  Control}, pp.~7157--7170, 2012.

\bibitem{simpson2012droop}
J.~Simpson-Porco, F.~D{\"o}rfler, and F.~Bullo, ``Droop-controlled inverters in
  microgrids are {Kuramoto} oscillators,'' in {\em Proceedings of the 3rd IFAC
  Workshop on Distributed Estimation and Control in Networked Systems},
  pp.~264--269, 2012.

\end{thebibliography}

\end{document}